\newtheorem{theorem}{Theorem}[section]
\newtheorem{corollary}[theorem]{Corollary}
\newtheorem{lemma}[theorem]{Lemma}
\theoremstyle{definition}
\newtheorem{definition}[theorem]{Definition}
\newcommand{\re}{\mathbb{R}}
\newcommand{\D}{\mathrm{div }}
\numberwithin{equation}{section}
\author[T.~M.~Nascimento]{Thialita M.~Nascimento}
\address[T.~M.~Nascimento]{Department of Mathematics \\ University of Florida \\ P.O. Box 118105 \\ Gainesville \\ FL 32611 \\ USA}
\email{thnasc@iastate.edu}
\author[X.~H.~Nguyen]{Xuan Hien Nguyen}
\address[X.~H.~Nguyen]{Department of Mathematics \\ Iowa State University \\ 396 Carver Hall \\ Ames \\ IA 50010 \\ USA}
\email{xhnguyen@iastate.edu}
\author[P.~R.~Stinga]{Pablo Ra\'ul Stinga}
\address[P.~R.~Stinga]{Department of Mathematics \\ Iowa State University \\ 396 Carver Hall \\ Ames \\ IA 50010 \\ USA}
\email{stinga@iastate.edu}
\keywords{Volume constrained free boundary problems, degenerate and singular elliptic PDEs, regularity of solutions}
\subjclass[2010]{35R35, 35B65, 42B37}
\thanks{The third author was partially supported by Simons Foundation grant MP-TSM-00002709}
\begin{document}
%%%%%%%%%%%%%%%%%%%%%%%%%%%%%%%%%%%%%%%%%%%%%%%%%%

%%%%%%%%%%%%%%%%%%%%%%%%%%%%%%%%%%%%%%%%%%%%%%%%%%
\title[Regularity of degenerate or singular free boundary problem]{Regularity of solutions
to degenerate and singular free boundary problems with volume constraint}
%%%%%%%%%%%%%%%%%%%%%%%%%%%%%%%%%%%%%%%%%%%%%%%%%%

%%%%%%%%%%%%%%%%%%%%%%%%%%%%%%%%%%%%%%%%%%%%%%%%%%
\begin{abstract}
We prove existence and regularity of solutions to degenerate and singular elliptic free boundary problems,
where the volume of the positivity set of the solution is prescribed.
\end{abstract}
%%%%%%%%%%%%%%%%%%%%%%%%%%%%%%%%%%%%%%%%%%%%%%%%%%

\maketitle

%%%%%%%%%%%%%%%%%%%%%%%%%%%%%%%%%%%%%%%%%%%%%%%%%%
\section{Introduction}
%%%%%%%%%%%%%%%%%%%%%%%%%%%%%%%%%%%%%%%%%%%%%%%%%%

 Our aim is to show existence and regularity of solutions $u$ to the following weighted volume
constrained free boundary problem, where the measure of the positivity set of $u$
is prescribed. Let $\omega=\omega(x)$ be a Muckenhoupt $A_2$ weight on $\re^n$, $n\geq2$.
For a measurable set $E$ we denote $\omega(E)=\int_E\omega(x)\,dx$.
Fix $0<m<\omega(B_1)$ and a smooth function $g$ on $\overline{B_1}$
such that $g\geq c > 0$ on $\partial B_1$. We consider the weighted minimization problem
\begin{equation}\label{weighted prob}
\min_{v\in K_0}\int_{B_1}|\nabla v|^2\omega\,dx
\end{equation}
where
$$
K_0 = \big\{ v \in H^1 ( B_1, \omega) :   \,  v\big|_{\partial B_1} = g, \,  \omega (\{ v > 0 \}\cap B_1) = m\big\}$$
and $H^1(B_1,\omega)$ denotes the corresponding weighted Sobolev space (see Section \ref{prelim sct} for notation).

Free boundary problems with volume constraints as \eqref{weighted prob}
have received significant attention over the past five decades
owing to their crucial role in applied sciences and the mathematical challenges they present
(see, for example, \cite{AguiAltCaff86,ACS87,BMW06,Teix05,Teix10}).
These problems arise in the study of best insulation devices,
semiconductor theory and plasma physics, just to name a few.
From a mathematical perspective, the questions of existence of an optimal configuration $u$,
as well as regularity properties of $u$
and of the free boundary $\partial \{ u > 0\}$, pose important difficulties.
The first, most influential work in this direction is the paper by Aguilera--Alt--Caffarelli \cite{AguiAltCaff86},
in which they consider \eqref{weighted prob} with no weight, that is, $\omega\equiv1$.
In this case, they reduce the problem to a one phase free boundary problem and apply the results of Alt--Caffarelli \cite{ALtCaff81}
to prove existence and regularity of solutions.
Generalizations of the unweighted case were carried out in, for example, \cite{Led96, AFMT99, Tilli99}.
More recently, nonlocal versions of the unweighted problem were considered in, for instance,
\cite{TeixTeym, DaSilvaRossi19}. In particular, in \cite{TeixTeym}, the Caffarelli--Silvestre extension problem
for the fractional Laplacian in $\re^n$ (see, for instance, \cite[Part~IV]{Stinga_2024})
leads the authors to consider weighted equations of the form $\D (\omega (X) \nabla u ) = f(X)$,
but the problem is different than \eqref{weighted prob}.
Our work is thus motivated by those of Aguilera--Alt--Caffarelli \cite{AguiAltCaff86} and
Teixeira--Teymurazyan \cite{TeixTeym}.

From the mathematical viewpoint, this article is the first (up to the best of our knowledge) to establish the existence of an open optimal set $\{u> 0\}$ with prescribed volume in weighted spaces, and the regularity of $u$. Our problem \eqref{weighted prob} presents at least two challenges compared to other optimal design problems governed by elliptic equations. The first one concerns the local regularity of minimizers. Since $A_2$ weights are, in general, only measurable, locally integrable functions, one can only expect H\"older continuity for a minimizer $u$. This limitation prevents us from applying the ideas and techniques used in \cite{AguiAltCaff86}, \cite{ALtCaff81} or \cite{TeixTeym} to prove existence and regularity,
where the Lipschitz regularity of minimizers up to the free boundary
$\partial \{ u > 0 \}$ plays a fundamental role. The second difficulty is the absence of an
isoperimetric inequality for general $A_2$ weights,
which is an essential tool in our approach for adjusting the measure
of the positivity set to the desired value in order to prove existence.
To be more precise, in the proof of existence, we need to assume that $\omega\in A_2$ satisfies either
one of the following two conditions.
\begin{itemize}
\item \textbf{Weighted isoperimetric inequality.}~There is a constant $C_0>0$ such that,
for any $u\in W^{1,1}(B_1)$ and $s>0$,
\begin{equation}\label{eq:isoperimetricassumption}
\omega(\{u>s\}\cap B_1)^{\frac{n-1}{n}}\leq C_0\int_{\{u=s\}\cap B_1}\omega\,d\mathcal{H}^{n-1}
\end{equation}
where $d\mathcal{H}^{n-1}$ denotes the $(n-1)$-dimensional Hausdorff measure in $\mathbb{R}^n$.
\item \textbf{Bound from below.}~There is $\tau>0$ such that
\begin{equation}\label{eq:lowerboundassumption}
\omega(x)\geq\tau\qquad\hbox{for a.e.}~x\in B_1.
\end{equation}
\end{itemize}
However, only the $A_2$ condition on $\omega$ is used to prove regularity.

Our main result reads as follows (see Section \ref{prelim sct} for notation).

\begin{theorem}[Existence and regularity of solutions]\label{thm:main}
Let $\omega$ be an $A_2$ weight. Assume that either $\omega$ satisfies
the isoperimetric inequality \eqref{eq:isoperimetricassumption},
or that it is bounded below away from zero in $B_1$ as in \eqref{eq:lowerboundassumption}. Then there is a minimizer $u\in K_0$ 
to problem \eqref{weighted prob}
such that $0\leq u\leq\|g\|_{L^\infty(\partial B_1)}$ in $B_1$. Moreover, $u$ is locally H\"older continuous in $B_1$, that is,
there exists $0<\beta<1$ depending only on $[\omega]_{A_2}$ and $n$ such that, for any compact set $K\subset B_1$,
$$[u]_{C^{\beta}(K)}\leq C\|u\|_{L^2(B_1,\omega)}$$
where $C>0$ depends only on $K$, $[\omega]_{A_2}$ and $n$. Furthermore, $u$
is a weak solution to $\D(\omega(x)\nabla u)=0$ in the open set $\{ u > 0\}\cap B_1$. 
\end{theorem}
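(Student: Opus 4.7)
The plan follows the penalization strategy of Aguilera--Alt--Caffarelli \cite{AguiAltCaff86}, adapted to the degenerate weighted setting. First, I would introduce a penalized functional
$$J_\varepsilon(v)=\int_{B_1}|\nabla v|^2\omega\,dx+F_\varepsilon\bigl(\omega(\{v>0\}\cap B_1)\bigr),$$
where $F_\varepsilon\colon[0,\omega(B_1)]\to[0,\infty)$ is continuous, minimized at $s=m$, and penalizes both directions with slopes of order $1/\varepsilon$. Minimize $J_\varepsilon$ over the simpler class $K=\{v\in H^1(B_1,\omega): v\geq 0,\ v|_{\partial B_1}=g\}$, which lacks the volume constraint. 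Existence of a minimizer $u_\varepsilon\in K$ is standard by the direct method: coercivity comes from the weighted Poincar\'e inequality and the trace condition, the Dirichlet term is weakly lower semicontinuous by convexity, and the positivity-set term is handled via Fatou's lemma on an a.e.\ convergent subsequence. The a priori bound $0\leq u_\varepsilon\leq M:=\|g\|_{L^\infty(\partial B_1)}$ follows by comparing $u_\varepsilon$ with $\min(u_\varepsilon,M)$, which does not enlarge $\{u_\varepsilon>0\}$ and has strictly smaller Dirichlet energy on the exceedance set.

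The crucial step is to show that for $\varepsilon$ small, every $u_\varepsilon$ actually satisfies $\omega(\{u_\varepsilon>0\}\cap B_1)=m$. If the measure is strictly less than $m$, one must produce a competitor $v\in K$ whose positivity set has $\omega$-measure larger by $\delta>0$ with Dirichlet cost $o(\delta/\varepsilon)$, forcing $J_\varepsilon(v)<J_\varepsilon(u_\varepsilon)$; the reverse case is handled by a level-set cut $\max(u_\varepsilon-t,0)$ with $t$ tuned so that the excised sliver has the desired $\omega$-measure. The assumption \eqref{eq:isoperimetricassumption} enables a classical capacity-style bump, whereas under \eqref{eq:lowerboundassumption} one has $\omega(E)\geq\tau|E|$ and can reduce to the unweighted isoperimetric inequality to control the perturbation. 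I expect this volume-adjustment step to be the main technical obstacle, since no universal weighted isoperimetric inequality is available for general $A_2$ weights.

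Once the constraint is saturated, $u:=u_\varepsilon$ is a minimizer of \eqref{weighted prob}. For admissible variations $u+t\varphi$ with $\varphi\in C_c^\infty(\{u>0\})$ and $|t|$ sufficiently small (so that $u+t\varphi>0$ on $\operatorname{supp}\varphi$), the positivity set, and hence the constraint, is unchanged; differentiating at $t=0$ yields $\int_{B_1}\omega\nabla u\cdot\nabla\varphi\,dx=0$, i.e., $u$ is a weak solution of $\D(\omega\nabla u)=0$ in $\{u>0\}\cap B_1$. Openness of this set will follow from the continuity of $u$. To promote this to H\"older regularity up to the free boundary, I would further establish that $u$ is a global weak subsolution in $B_1$: variations $u-t\varphi$ with $\varphi\in C_c^\infty(B_1)$, $\varphi\geq 0$, $t>0$ can only shrink the positivity set, and exploiting the penalized characterization (or a Lagrange-multiplier reformulation and absorbing the penalty contribution for $\varepsilon$ small) produces $\int_{B_1}\omega\nabla u\cdot\nabla\varphi\,dx\leq 0$.

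With $u\geq 0$ bounded, a global weak subsolution, and a solution where positive, the De Giorgi--Moser--Nash theory for degenerate elliptic equations with $A_2$ weights of Fabes--Kenig--Serapioni provides local boundedness, a Caccioppoli estimate, and a weak Harnack inequality with constants depending only on $[\omega]_{A_2}$ and $n$. A standard dyadic oscillation-decay iteration, applied separately near points of $\{u>0\}$ (where $u$ is a weak solution) and near the free boundary (where the subsolution property together with $u\geq 0$ gives the needed decay of the supremum), then yields the estimate $[u]_{C^\beta(K)}\leq C\|u\|_{L^2(B_1,\omega)}$ with $\beta\in(0,1)$ and $C>0$ depending only on $K$, $[\omega]_{A_2}$, and $n$.
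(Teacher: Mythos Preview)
Your penalization strategy, existence by the direct method, the boundedness via truncation, the subsolution property, and the idea of forcing the volume constraint for small $\varepsilon$ all match the paper. One minor difference: the paper uses a \emph{one-sided} penalty $f_\varepsilon(t)=\varepsilon^{-1}(t-m)^+$, so the case $\omega(\{u_\varepsilon>0\})<m$ is dispatched immediately by harmonic replacement on a small ball touching the free boundary (the penalty vanishes on both competitors, and the replacement strictly decreases Dirichlet energy). Your two-sided penalty should also work but makes that direction harder than necessary.

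The genuine gap is in your regularity argument near the free boundary. The assertion that ``the subsolution property together with $u\geq 0$ gives the needed decay of the supremum'' is not justified: the local maximum principle for nonnegative subsolutions only gives $\sup_{B_{r/2}}u\le C\bigl(\tfrac{1}{\omega(B_r)}\int_{B_r}u^2\omega\bigr)^{1/2}$, which says nothing about decay unless you already know the $L^2$ average decays. The weak Harnack inequality from Fabes--Kenig--Serapioni applies to nonnegative \emph{super}solutions, not subsolutions, so the standard De Giorgi oscillation iteration is unavailable on the $\{u=0\}$ side. You would need a density estimate for $\{u=0\}$ near the free boundary to feed into a De Giorgi-type measure-to-pointwise lemma, but such estimates are typically obtained \emph{after} continuity, not before. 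There is also a circularity: you invoke ``solution where positive'' as input to the regularity proof, yet this statement only has PDE content once $\{u>0\}$ is known to be open, i.e., once $u$ is continuous.

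The paper avoids this entirely by a Campanato-style approximation that uses minimality directly and makes no case distinction between $\{u>0\}$ and the free boundary. After normalizing so that $\tfrac{1}{\tilde\omega(B_1)}\int_{B_1}|\tilde u|^2\tilde\omega\le 1$, one compares $\tilde u$ to its $\tilde\omega$-harmonic replacement $h$ in $B_{1/2}$; minimality and the $1/\varepsilon$-Lipschitz bound on the penalty yield $\int_{B_{1/2}}|\nabla\tilde u|^2\tilde\omega\le\int_{B_{1/2}}|\nabla h|^2\tilde\omega+C\eta$, hence by Poincar\'e $\tfrac{1}{\tilde\omega(B_{1/2})}\int_{B_{1/2}}|\tilde u-h|^2\tilde\omega\le C\eta$. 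Since $h$ is $C^\alpha$ by Fabes--Kenig--Serapioni, one iterates at geometric scales to get $C^{\alpha/2}$ regularity for $\tilde u$ with constants depending only on $n$ and $[\omega]_{A_2}$. Replacing your last paragraph with this comparison argument closes the gap.
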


It is an open problem to characterize the weights $\omega$ for which \eqref{eq:isoperimetricassumption}
is true and, in particular, whether or not is true for general $A_2$ weights.

Examples of $A_2$ weights for which \eqref{eq:isoperimetricassumption} holds
are $A_1$ weights, that is, weights $\omega$ for which there is a constant $C>0$ such that
$M\omega(x)\leq C\omega(x)$ for a.e.~$x\in\mathbb{R}^n$, where $M$ is the classical Hardy--Littlewood
maximal function (note that $A_1\subset A_2$). Indeed, this is a consequence of the results in \cite[Section~2.4.2]{Turesson00}.
In fact, \eqref{eq:isoperimetricassumption} is stated as \cite[Theorem~2.4.8]{Turesson00}
(this result is valid for our case because Theorem 2.4.7--the co-area formula, and Lemma 2.4.1--the Poincar\'e inequality, in \cite{Turesson00}
are true under our hypotheses as well). Furthermore, we do not require the $A_1$ weight to be continuous.
Canonical examples of $A_2$ weights satisfying \eqref{eq:lowerboundassumption}
are $\omega(x)=|x|^{-\alpha}$ for $0<\alpha<n$, or, more generally,
$\omega(x)=\operatorname{dist}(x,Z)^{- \alpha}$, where  $Z$ is a ``thin" set of zero Lebesgue measure,
and anisotropic variations of power weights
(see \cite{LST} where the lower bound in $\omega$ is assumed to study a weighted one phase problem
generalizing \cite{ALtCaff81}). In particular, for the case of the extension problem for the fractional Laplacian $(-\Delta)^s$,
where $\omega(x,y)=|y|^{1-2s}$ for $(x,y)\in\mathbb{R}^{n+1}$, we have the restriction $1/2\leq s<1$.

Our approach to the proof of Theorem \ref{thm:main} relies on a penalization method 
that allows us to study the problem without relying on the smoothness of the free boundary as in \cite{AguiAltCaff86}.
The new functional to be minimized takes the form 
$$\int_{B_1} |\nabla v |^2\omega\,dx + f_{\varepsilon} (v)$$
for some parameter $\varepsilon>0$. Assuming only the $A_2$ condition on the weight,
we show that minimizers of the penalized problem exist for all $\varepsilon > 0$ and are H\"older continuous.
The H\"older continuity is proved using a blow-up approximation
by constants in the spirit of Caffarelli's regularity technique. 
Notice that this technique was also implemented in \cite{LST} for the one-phase problem.
However, unlike \cite{LST}, we are able to establish H\"older continuity of solutions to our problem \eqref{weighted prob}
without imposing any further restrictions on the weight $\omega$ (other than being in the $A_2$ class).
Then we show that there exists $\varepsilon_\ast> 0$ small enough such that $u_\ast$ is a minimizer 
for the penalized problem if and only if $u_\ast$ is a minimizer for \eqref{weighted prob}.
Here we argue by contradiction, finding an appropriate competitor function $u_t$ for which
$\{ u_t > 0\}$ approximates in measure $ \{ u > 0\} $, and we apply the hypothesis on $\omega$
stated in Theorem \ref{thm:main}. This argument for proving existence of minimizers was implemented in \cite{BHP05} in the unweighted case, where Lipschitz continuity of minimizers was also proved.

The analysis presented in this paper extends without difficulty to volume constrained free boundary problems
involving more general energies of the form
$$\int_{\Omega}(\nabla v)^TA(x) \nabla v\, dx$$
where $\Omega$ is a smooth bounded domain
and the matrix of coefficients $A(x)$ is bounded, measurable, and 
degenerate elliptic with respect to the weight $\omega$, that is,
there are constants $0<\lambda\leq\Lambda$ such that
$\lambda\omega(x) |\xi|^2 \le\xi^TA(x)\xi \le \Lambda\omega(x) |\xi|^2$ for a.e.\,$x\in\Omega$
and all $\xi\in\re^n$. In this case, the constants in Theorem \ref{thm:main}
will also depend on $\lambda$ and $\Lambda$. This general case is particularly interesting
because it arises when considering fractional elliptic equations in divergence form
$(-\D(a(x)\nabla))^sw=f$ in $\Omega$, see \cite{Caffarelli-Stinga,Stinga-Zhang}.

An interesting question is whether existence of minimizers fails if one removes
either assumption \eqref{eq:isoperimetricassumption} or \eqref{eq:lowerboundassumption}.
We believe that one should be able to remove them, but the technique will have to be different.

The paper is organized as follows. In Section \ref{prelim sct}, we introduce preliminary results about weighted Sobolev spaces
and degenerate/singular elliptic equations. Section \ref{general prop sct} is dedicated to the analysis of the penalized problem, including existence, boundedness and local regularity. These properties are derived without any additional assumptions on the $A_2$ weight.
As mentioned earlier, to adjust the measure of the positivity set to the prescribed value
and close the existence part of Theorem \ref{thm:main}, we must consider weights for which either an isoperimetric inequality
holds or that are bounded below away from zero in $B_1$.
Thus, in Section \ref{small eps sct}, under such assumptions, we finalize the proof of our main result.

%%%%%%%%%%%%%%%%%%%%%%%%%%%%%%%%%%%%%%%%%%%%%%%%%%
\section{Preliminaries}\label{prelim sct}
%%%%%%%%%%%%%%%%%%%%%%%%%%%%%%%%%%%%%%%%%%%%%%%%%%

In this section we collect results regarding weighted Sobolev spaces and degenerate elliptic equations.
References here are \cite{FKS,HKM,Turesson00}, see also \cite{FJK,FKJ}.

We denote by $B_R(x)$ the ball in $\mathbb R^n$ of radius $R$ centered at $x$. When the center is the origin, we omit it and just write $B_R$. 

A nonnegative measurable function $\omega=\omega(x)$ defined on $\re^n$ is an
$A_2$ weight if both $\omega$ and $\omega^{-1}$ are locally integrable and 
\begin{equation}\label{A_2 w definiton}
[\omega]_{A_2}:=\sup\limits_{B \subset\re^n} \left( \frac{1}{|B|} \int_{B} \omega\, dx \right) \left( \frac{1}{|B|} \int_B \omega^{-1}\,dx \right) <\infty
\end{equation}
where the supremum is taken over all balls $B\subset \re^n$. Here $|E|$ denotes the Lebesgue measure of
a measurable set $E \subset \mathbb{R}^n$. 
In this case, we call \eqref{A_2 w definiton} the $A_2$ condition and write $\omega\in A_2$.

Throughout this section, we assume that $\omega\in A_2$.

By definition of $A_2$ weights, we may assume that $0<\omega<\infty$ almost everywhere. Moreover, as $\omega\in L^1_{\mathrm{loc}}(\mathbb{R}^n)$, the induced measure defined by
\[
\omega(E):=\int_E \omega(x)\,dx
\]
is a positive Radon measure. Moreover, the weight $\tilde{\omega}(x):=\omega(x_0+rx)$, for fixed $x_0\in\re^n$ and $r>0$,
is also an $A_2$ weight with $[\tilde{\omega}]_{A_2}=[\omega]_{A_2}$.

\begin{lemma}[Properties of $A_2$ weights]\label{lem:propertiesofA2}
Let $\omega$ be an $A_2$ weight on $\re^n$.
\begin{enumerate}[$(1)$]
\item There exists a constant $c_1>0$ depending only on $[\omega]_{A_2}$ such that 
$$c_{1} \left( \frac{|E|}{|B|} \right)^2 \le  \frac{\omega(E)}{\omega(B)}$$
whenever $B \subset \mathbb{R}^n$ is a ball and $E\subset B$ is a measurable set.
\item There exists a \emph{doubling constant} $D=D(n,[\omega]_{A_2})> 0$ such that 
$$0 < \omega( B_{2r} (x_0)) \le D \omega (B_r(x_0))$$
for all balls $B_r(x_0) \subset \mathbb{R}^n$. 
\item \label{Reverse strong doubling}There exist constants $C_2 > 1$ and $0<S_D\leq1$
depending only on $n$ and $D$ such that 
$$\frac{\omega(E)}{\omega(B)} \le C_2  \left( \frac{|E|}{|B|} \right)^{S_D}$$
whenever $B \subset \mathbb{R}^n$ is a ball and $E\subset B$ is a measurable set.
\end{enumerate}
\end{lemma}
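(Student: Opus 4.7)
\medskip

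\noindent\textbf{Proof proposal.} The three statements are classical consequences of the $A_2$ condition; the strategy is to extract (1) directly from the definition, derive (2) as a special case of (1), and obtain (3) via the self-improving (reverse H\"older) property of $A_2$ weights.

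For (1), the plan is to apply the Cauchy--Schwarz inequality to the trivial factorization $\mathbf{1}_E=\omega^{1/2}\omega^{-1/2}\mathbf{1}_E$, which yields
\[
|E|^2=\Bigl(\int_E \omega^{1/2}\omega^{-1/2}\,dx\Bigr)^2\le \omega(E)\int_E \omega^{-1}\,dx\le \omega(E)\int_B\omega^{-1}\,dx.
\]
The $A_2$ condition \eqref{A_2 w definiton} gives $\int_B\omega^{-1}\,dx\le [\omega]_{A_2}|B|^2/\omega(B)$, and rearranging produces
\[
\Bigl(\frac{|E|}{|B|}\Bigr)^2\le [\omega]_{A_2}\,\frac{\omega(E)}{\omega(B)},
\]
so one may take $c_1=1/[\omega]_{A_2}$.

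For (2), I would simply specialize (1) to $E=B_r(x_0)$ and $B=B_{2r}(x_0)$. Then $|E|/|B|=2^{-n}$ and the inequality in (1) rearranges to
\[
\omega(B_{2r}(x_0))\le \frac{2^{2n}}{c_1}\,\omega(B_r(x_0)),
\]
giving the doubling constant $D=4^n[\omega]_{A_2}$. Positivity of $\omega(B_r(x_0))$ follows from $\omega>0$ almost everywhere, which is part of the standing assumption on $A_2$ weights.

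For (3), the key input is the reverse H\"older inequality for $A_2$ (equivalently, $A_\infty$) weights: there exist $q>1$ and $C>0$, depending only on $n$ and the doubling constant $D$, such that
\[
\Bigl(\frac{1}{|B|}\int_B\omega^q\,dx\Bigr)^{1/q}\le \frac{C}{|B|}\int_B\omega\,dx
\]
for every ball $B\subset\re^n$. Assuming this (it is standard and cited in the references \cite{FKS,HKM,Turesson00}), I would apply H\"older's inequality with exponents $q$ and $q/(q-1)$ to estimate
\[
\omega(E)=\int_E\omega\,dx\le |E|^{1-1/q}\Bigl(\int_B\omega^q\,dx\Bigr)^{1/q}\le C\,|E|^{1-1/q}|B|^{1/q}\cdot\frac{\omega(B)}{|B|}=C\Bigl(\frac{|E|}{|B|}\Bigr)^{1-1/q}\omega(B),
\]
yielding the conclusion with $S_D=1-1/q\in(0,1]$ and $C_2=C$, both depending only on $n$ and $D$. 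The main obstacle is (3): proving the reverse H\"older inequality from scratch requires either a Calder\'on--Zygmund stopping-time argument applied to $\omega$ on dyadic subcubes, exploiting doubling to iterate a ``good-$\lambda$'' type estimate, or the equivalence of $A_2$ with the $A_\infty$ condition. Since the statement is part of a preliminaries section citing \cite{FKS,HKM,Turesson00}, I would invoke the reverse H\"older inequality as a black box rather than reproduce its proof.
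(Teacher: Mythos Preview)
Your argument is correct and follows the standard textbook route: Cauchy--Schwarz plus the $A_2$ condition for (1), specialization to concentric balls for (2), and reverse H\"older followed by H\"older for (3). The paper itself does not supply a proof of this lemma at all; it is stated in the preliminaries section as a known fact with references to \cite{FKS,HKM,Turesson00}, so there is nothing to compare against beyond noting that you have reproduced the expected arguments. One small caveat: in (3) you assert that the reverse H\"older exponent and constant depend only on $n$ and the doubling constant $D$, but doubling alone does not imply reverse H\"older---you need the $A_\infty$ (equivalently, $A_2$) structure. Since here $D$ is by construction a function of $[\omega]_{A_2}$, the dependence you claim is ultimately fine, but it would be cleaner to say the constants depend on $n$ and $[\omega]_{A_2}$, which then determines $D$.
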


By Lemma \ref{lem:propertiesofA2}$(1)$, we get that $\omega(E)=0$ if and only if $|E|=0$.

The set $L^2(B_1,\omega)$ is the Hilbert space of all measurable functions $u$
on $B_1$ such that
$$\|u\|_{L^2(B_1,\omega)}^2=\int_{B_1}|u|^2\omega\,dx<\infty.$$
By H\"older's inequality, $L^2(B_1,\omega)\subset L^1(B_1)$.
The weighted Sobolev space $H^1(B_1,\omega)$ is the Hilbert space of functions
$u\in L^2(B_1,\omega)$ such that the weak gradient $\nabla u$ is in $L^2(B_1,\omega)$ under the obvious norm.
We have that $C^\infty(\overline{B_1})$ is dense in $H^1(B_1,\omega)$.
Using standard arguments as in the unweighted case,
one can show that the embedding $H^1(B_1,\omega)\subset\subset L^2(B_1,\omega)$ is compact.
The space $H^1_0(B_1,\omega)$ is the closure of $C^\infty_c(B_1)$ with respect to the norm of $H^1(B_1,\omega)$.
If we fix $g\in C^\infty(\overline{B_1})$, then $H_g^{1}(B_1,\omega) $ denotes the set of all $u\in H^1 (B_1,\omega)$
for which $u-g\in H^1_0(B_1,\omega)$.

It follows by density and H\"older's inequality that
$H^1(B_1,\omega)\subset W^{1,1}(B_1)$, where $W^{1,1}(B_1)$
is the classical Sobolev space of functions $u\in L^1(B_1)$ with $\nabla u\in L^1(B_1)$. In fact,
\begin{equation}\label{eq:embeddinginW11}
\| u \|_{W^{1,1} (B_1)} \le C \|u\|_{H^1(B_1, \omega)}
\end{equation}
for some $C > 0$ depending only on $n$ and $[\omega]_{A_2}$. Similarly $H^1_0(B_1,\omega)\subset W^{1,1}_0(B_1)$.
In particular, if $u\in H^1_g(B_1,\omega)$ then $u=g$ on $\partial\Omega$ in the sense of traces in $W^{1,1}(B_1)$.

The space $H^1(B_R,\omega)$ is defined in a similar way by replacing $B_1$ by $B_R$, $R>0$.
The next result is a direct consequence of \cite[Theorem 1.2]{FKS}, see also \cite[Chapter 15]{HKM}. 

\begin{lemma}[Weighted Poincar\'e inequality]\label{lem:Poincare}
There is a constant $C_{P,\omega}>0$ depending only on $n$ and $[\omega]_{A_2}$ such that,
for any $u\in H_0^1(B_R,\omega)$,
\begin{equation} \label{Poincare ineq}
\int_{B_R} |u|^2 \omega\,dx \le C_{P,\omega}R^2 \int_{B_R} |\nabla u|^2 \omega\,dx.
\end{equation}
\end{lemma}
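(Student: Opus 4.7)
The plan is to deduce this inequality directly from the Fabes--Kenig--Serapioni weighted Sobolev inequality combined with H\"older's inequality.

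First, by density of $C^\infty_c(B_R)$ in $H^1_0(B_R,\omega)$, it suffices to prove the estimate for $u\in C^\infty_c(B_R)$. For such $u$, \cite[Theorem~1.2]{FKS} provides constants $\kappa>1$ and $C>0$, depending only on $n$ and $[\omega]_{A_2}$, such that
\begin{equation*}
\left(\frac{1}{\omega(B_R)}\int_{B_R}|u|^{2\kappa}\omega\,dx\right)^{1/(2\kappa)}\leq CR\left(\frac{1}{\omega(B_R)}\int_{B_R}|\nabla u|^2\omega\,dx\right)^{1/2}.
\end{equation*}
The essential feature here is that no weighted mean-value term appears on the left-hand side; this is legitimate because $u$ has compact support in $B_R$ and can therefore be extended by zero across $\partial B_R$ without changing either side of the inequality.

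Next, I would apply H\"older's inequality with exponent $\kappa$ against the probability measure $\omega(B_R)^{-1}\omega\,dx$ to obtain
\begin{equation*}
\left(\frac{1}{\omega(B_R)}\int_{B_R}|u|^2\omega\,dx\right)^{1/2}\leq\left(\frac{1}{\omega(B_R)}\int_{B_R}|u|^{2\kappa}\omega\,dx\right)^{1/(2\kappa)}.
\end{equation*}
Chaining this with the previous display and cancelling the common factor $\omega(B_R)^{1/2}$ from both sides yields
\begin{equation*}
\int_{B_R}|u|^2\omega\,dx\leq C^2R^2\int_{B_R}|\nabla u|^2\omega\,dx,
\end{equation*}
which is the claim with $C_{P,\omega}=C^2$. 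Passing to the limit via density extends the bound to every $u\in H^1_0(B_R,\omega)$.

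The only technical point is to ensure that the version of the Fabes--Kenig--Serapioni inequality invoked already dispenses with the weighted mean on the left-hand side. This is indeed the version available for functions supported in $B_R$, as also recorded in \cite[Chapter~15]{HKM}. Rescaling is not an issue because the $A_2$ constant of $\omega$ is invariant under dilations and translations, so the sharp geometric factor $R^2$ emerges from the Sobolev estimate itself without any further work.
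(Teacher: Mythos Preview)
Your argument is correct and is exactly the route the paper has in mind: the paper does not give a proof at all but simply declares the lemma a ``direct consequence of \cite[Theorem~1.2]{FKS}'', and what you have written is precisely the one-line deduction (Sobolev for compactly supported functions followed by H\"older/Jensen to pass from the $L^{2\kappa}$ average to the $L^2$ average). Nothing further is needed.
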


We say that $h\in H^1(B_1,\omega)$ is a weak solution to the degenerate/singular elliptic equation
\begin{equation}\label{eq:harmonicomega}
\D(\omega(x)\nabla h)=0\qquad\hbox{in}~B_1
\end{equation}
if
$$\int_{B_1}\omega(x)\nabla h\nabla\varphi\,dx=0$$
for all $\varphi\in H^1_0(B_1,\omega)$. By \cite{FKS}, given $g\in H^1(B_1,\omega)$,
there exists a unique weak solution $h\in H^1(B_1,\omega)$ to \eqref{eq:harmonicomega}
such that $h-g\in H^1_0(B_1,\omega)$. We have the following local boundedness
and H\"older regularity estimate that was proved in \cite{FKS}.
 
\begin{theorem}[Harnack inequality and H\"older regularity]\label{continuity of omega-harmonic}
Let $h\in H^1(B_1,\omega)$ be a weak solution to \eqref{eq:harmonicomega}.
If $h\geq0$ in $B_1$, then $h$ satisfies an interior Harnack inequality, that is,
there exists $C_H>0$ depending only on $n$ and $[\omega]_{A_2}$ such that
$$\sup_{B_{1/2}}h\leq C_H\inf_{B_{1/2}}h.$$
As a consequence, $h$ is locally H\"older continuous in $B_1$, that is,
there exist constants $\bar{C}>0$ and $0<\alpha<1$ depending
only on $n$ and $[\omega]_{A_2}$ such that 
$$\| h\|_{C^{0,\alpha}(B_{1/2})} \le\bar{C}\left( \frac{1}{\omega(B_1)} \int_{B_1}|h|^2\omega\, dx \right)^{1/2}.$$
\end{theorem}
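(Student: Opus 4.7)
The plan is to prove Theorem~\ref{continuity of omega-harmonic} via Moser's iteration adapted to the weighted setting, as originally carried out by Fabes, Kenig and Serapioni. The $A_2$ hypothesis supplies both the doubling property from Lemma~\ref{lem:propertiesofA2}(2) and a Sobolev--Poincar\'e inequality with a gain in integrability, which will substitute for the classical Sobolev embedding.

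First I would establish the key analytic input: there exist $k=k(n,[\omega]_{A_2})>1$ and $C_S>0$ such that for every $u\in H^1_0(B_R,\omega)$,
\begin{equation*}
\left(\frac{1}{\omega(B_R)}\int_{B_R}|u|^{2k}\omega\,dx\right)^{1/(2k)}\leq C_S R\left(\frac{1}{\omega(B_R)}\int_{B_R}|\nabla u|^2\omega\,dx\right)^{1/2}.
\end{equation*}
This follows from Lemma~\ref{lem:Poincare} combined with the self-improvement of $A_2$ to $A_{2-\varepsilon}$ furnished by Muckenhoupt's reverse H\"older inequality. Next, for a weak solution $h$, I would derive a Caccioppoli-type estimate by testing the equation with $\varphi=\eta^2(h^+)^{2p-1}$ for $p\geq 1$ and $\eta\in C^\infty_c(B_R)$, which yields
\begin{equation*}
\int_{B_R}\eta^2|\nabla(h^+)^p|^2\omega\,dx\leq Cp^2\int_{B_R}|\nabla\eta|^2(h^+)^{2p}\omega\,dx.
\end{equation*}
Feeding this into the Sobolev--Poincar\'e inequality and iterating on exponents $p_j=k^j$ over a telescoping family of balls, a standard geometric-sum argument produces the local boundedness estimate
\begin{equation*}
\sup_{B_{1/2}}|h|\leq\bar{C}\left(\frac{1}{\omega(B_1)}\int_{B_1}|h|^2\omega\,dx\right)^{1/2}.
\end{equation*}

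For the Harnack inequality with $h\geq 0$, I would use the logarithmic test function $\varphi=\eta^2(h+\delta)^{-1}$, sending $\delta\downarrow 0$ to derive an $L^2(\omega)$ bound on $\nabla\log h$; this shows $\log h\in\mathrm{BMO}$ with respect to the doubling measure $\omega\,dx$, and a weighted John--Nirenberg inequality then produces $p_0>0$ such that the $\omega$-averages of $h^{p_0}$ and $h^{-p_0}$ over $B_{3/4}$ are bounded by a common constant. Running Moser iteration in both the positive and negative exponent regimes and bridging the two via this John--Nirenberg bound yields $\sup_{B_{1/2}}h\leq C_H\inf_{B_{1/2}}h$. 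Applying Harnack to $h-\inf_{B_R}h$ and $\sup_{B_R}h-h$ over dyadic balls gives oscillation decay $\operatorname{osc}_{B_{R/2}}h\leq\theta\operatorname{osc}_{B_R}h$ with some $\theta<1$, which iterates to the stated H\"older estimate. The principal obstacle is the improved Sobolev inequality of the first step: absent the classical Sobolev embedding, the gain factor $k>1$ must be extracted from the structural properties of $A_2$ weights (doubling together with reverse H\"older), and its dependence on $[\omega]_{A_2}$ must be tracked carefully, since every subsequent iteration propagates this constant. Once this is in hand, the remaining ingredients---Caccioppoli, Moser iteration, log test function, and John--Nirenberg---proceed along classical lines with only notational adjustments to account for the measure $\omega\,dx$.
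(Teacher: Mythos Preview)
The paper does not supply its own proof of this theorem; it is stated as a preliminary result and attributed directly to Fabes--Kenig--Serapioni \cite{FKS}. Your proposal is a faithful outline of the Moser iteration scheme carried out in that reference (weighted Sobolev inequality with gain, Caccioppoli, iteration for local boundedness, logarithmic test function plus John--Nirenberg to bridge positive and negative exponents, oscillation decay), so it is correct and in line with the source the paper invokes.
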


%%%%%%%%%%%%%%%%%%%%%%%%%%%%%%%%%%%%%%%%%%%%%%%%%%
\section{The penalized problem with parameter $\varepsilon>0$}\label{general prop sct}
%%%%%%%%%%%%%%%%%%%%%%%%%%%%%%%%%%%%%%%%%%%%%%%%%%

In this section we prove existence, boundedness and local H\"older regularity of solutions
to a penalized formulation with parameter $\varepsilon>0$. Throughout this section we fix
a smooth function $g$ on $\overline{B_1}$
such that $g\geq c>0$ on $\partial B_1$, a parameter $\varepsilon>0$, a weight $\omega\in A_2$
and a constant $0<m<\omega(B_1)$.

Let 
$$f_{\varepsilon}(t) := \frac{1}{\varepsilon} (t - m )^+\qquad\hbox{for}~t\geq0$$
and consider the functional 
\begin{equation}\label{penalized functional}
    J_{\varepsilon} (v,\omega,B_1): = \int_{B_1}|\nabla v|^2\omega\,dx + f_{\varepsilon}(\omega(\{v>0\}\cap B_1)).
\end{equation}
The penalized problem we consider is
\begin{equation}\label{penalized problem}\tag{$P_{\varepsilon}$}
    \min\big\{J_{\varepsilon} (v,\omega, B_1): v\in H_g^1(B_1,\omega)\big\}.
\end{equation}
Clearly, if $v\in H^1_g(B_1)$ then $v^+=\max(v,0)\in H^1_g(B_1)$ and
$J_\varepsilon(v^+,\omega,B_1)\leq J_\varepsilon(v,\omega,B_1)$.

\begin{lemma}[Existence of minimizers for \eqref{penalized problem}]\label{lem:existencepenalized}
There is a minimizer $u\geq0$ to problem \eqref{penalized problem}.
\end{lemma}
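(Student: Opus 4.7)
The plan is to apply the direct method of the calculus of variations. First I would check that $\inf J_\varepsilon$ is finite by exhibiting $g$ itself as a competitor: since $g\in C^\infty(\overline{B_1})$ and $\omega$ is locally integrable, both $\int_{B_1}|\nabla g|^2\omega\,dx$ and $f_\varepsilon(\omega(\{g>0\}\cap B_1))\le f_\varepsilon(\omega(B_1))$ are finite. Take a minimizing sequence $\{v_k\}\subset H^1_g(B_1,\omega)$. Using that $J_\varepsilon(v_k^+,\omega,B_1)\le J_\varepsilon(v_k,\omega,B_1)$, and that on $\partial B_1$ we have $v_k=g\ge c>0$ so $v_k^+$ still satisfies $v_k^+-g\in H^1_0(B_1,\omega)$, I would replace $v_k$ by $v_k^+$ and assume $v_k\ge 0$.

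Next, to extract a limit, I would bound $\|v_k\|_{H^1(B_1,\omega)}$ uniformly. The Dirichlet part of $J_\varepsilon$ gives a bound on $\|\nabla v_k\|_{L^2(B_1,\omega)}$. Writing $v_k-g\in H^1_0(B_1,\omega)$ and applying the weighted Poincar\'e inequality of Lemma \ref{lem:Poincare} yields a uniform bound on $\|v_k-g\|_{L^2(B_1,\omega)}$, hence on $\|v_k\|_{H^1(B_1,\omega)}$. By the weak compactness of bounded sets in the Hilbert space $H^1(B_1,\omega)$ together with the compact embedding $H^1(B_1,\omega)\subset\subset L^2(B_1,\omega)$ recalled in Section \ref{prelim sct}, a subsequence (not relabeled) converges to some $u$ weakly in $H^1(B_1,\omega)$, strongly in $L^2(B_1,\omega)$, and a.e.\ in $B_1$. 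The affine subspace $H^1_g(B_1,\omega)$ is weakly closed, so $u\in H^1_g(B_1,\omega)$, and a.e.\ convergence preserves $u\ge 0$.

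It then remains to prove that $J_\varepsilon$ is lower semicontinuous along the selected subsequence. The Dirichlet term is weakly lower semicontinuous since $w\mapsto\int_{B_1}|\nabla w|^2\omega\,dx$ is convex and continuous on $H^1(B_1,\omega)$. For the penalty, a.e.\ convergence $v_k\to u$ gives the pointwise inequality $\mathbf{1}_{\{u>0\}}(x)\le\liminf_k\mathbf{1}_{\{v_k>0\}}(x)$ for a.e.\ $x\in B_1$, so Fatou's lemma applied to the Radon measure $\omega\,dx$ yields
\[
\omega(\{u>0\}\cap B_1)\le\liminf_{k\to\infty}\omega(\{v_k>0\}\cap B_1).
\]
Since $f_\varepsilon$ is continuous and nondecreasing, this inequality passes under $f_\varepsilon$. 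Summing the two lower-semicontinuity statements gives $J_\varepsilon(u,\omega,B_1)\le\liminf_k J_\varepsilon(v_k,\omega,B_1)=\inf J_\varepsilon$, so $u$ is a minimizer.

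The main obstacle is the penalty term: unlike the Dirichlet energy, $v\mapsto\omega(\{v>0\}\cap B_1)$ is neither convex nor lower semicontinuous under weak $H^1$-convergence in general. The point I would emphasize is that the compact embedding upgrades weak $H^1$-convergence to a.e.\ convergence, after which the monotonicity of $f_\varepsilon$ and Fatou's lemma in the measure $\omega\,dx$ together supply exactly the lower semicontinuity we need.
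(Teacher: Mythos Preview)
Your proof is correct and follows essentially the same route as the paper: the direct method, replacing $v_k$ by $v_k^+$, Poincar\'e for the $L^2$ bound, weak compactness plus the compact embedding, and lower semicontinuity of each term separately. The only substantive difference is your choice of competitor to show $\inf J_\varepsilon<\infty$: you take $g$ itself, while the paper constructs a function $v_r$ solving \eqref{function with small positive set} on an annulus $B_1\setminus B_r$, extended by zero in $B_r$, with $r$ chosen so that $\omega(\{v_r>0\})\le m$ and hence $f_\varepsilon(\omega(\{v_r>0\}))=0$. For the purposes of this lemma alone your choice is simpler and perfectly sufficient; the paper's choice, however, yields the bound $J_\varepsilon(u)\le C_0$ with $C_0$ \emph{independent of $\varepsilon$}, which is used later in Section~\ref{small eps sct} (see \eqref{estimate fo u < t}) when $\varepsilon\to0$. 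With $g$ as competitor the bound would generally blow up like $\varepsilon^{-1}$ if $\omega(\{g>0\}\cap B_1)>m$.
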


\begin{proof}
Given any ball $B_r\subset B_1$, let $v_r\in H^1_g(B_1\setminus B_r,\omega)$ be the unique weak solution to
\begin{equation}\label{function with small positive set}
\begin{cases}
\D(\omega(x)\nabla v)=0&\hbox{in}~B_1\\
v=g&\hbox{on}~\partial B_1\\
v=0&\hbox{on}~\partial B_r.
\end{cases}
\end{equation}
By the maximum principle, $\{v_r>0\}\cap B_1=B_1\setminus\overline{B_r}$.
We extend $v_r$ as $0$ inside of $B_r$, so that $v_r\in H^1_g(B_1,\omega)$.
Then, by choosing $r>0$ sufficiently large, 
$$\omega(\{ v_{r} > 0 \}\cap B_1) = \omega(B_1) - \omega(B_r) \le m.$$
Therefore,
\begin{equation}\label{Bound for J}
0 \le J_{\varepsilon} (v_r,\omega,B_1) \le C_0
\end{equation}
for some constant $C_0>0$ independent of $\varepsilon$ and
$$0\leq \mu:=\inf\big\{J_{\varepsilon} (v,\omega, B_1): v\in H_g^1(B_1,\omega)\big\}\leq C_0.$$
Let $\{v_j\}_{j=1}^\infty \subset H_g^1(B_1,\omega)$ be a minimizing sequence for $J_\varepsilon$.
Since $\mu\leq J_\varepsilon(v_j^+,\omega,B_1)\leq J_\varepsilon(v_j,\omega,B_1)$, we can assume that $v_j\geq0$.
For all $j$ sufficiently large, by \eqref{Bound for J}, 
$$\int_{B_1}  |\nabla v_j|^2\omega\,dx \le C_0.$$
Moreover, by the weighted Poincar\'e inequality in Lemma \ref{lem:Poincare},
$$\int_{B_1}|v_j|^2\omega\,dx\leq 2\int_{B_1}|v_j-g|^2\omega\,dx+2\int_{B_1}|g|^2\omega\,dx\leq C(C_0+1)$$
where $C>0$ is independent of $j$.
Thus, there exists $u\in H^1(B_1,\omega)$ such that, up to a subsequence, $v_j \to u$
weakly in $H^1(B_1,\omega)$. By the compact embedding
$H^1(B_1,\omega)\subset\subset L^2(B_1,\omega)$ and \eqref{eq:embeddinginW11},
up to a further subsequence, $v_{j} \to u$ strongly in $L^2 (B_1,\omega)$ and
a.e.\,in $B_1$. In particular, $u\geq0$ a.e.. Moreover, since $v_j-g$ weakly converges in $H^1_0(B_1,\omega)$, and $H^1_0(B_1,\omega)$ is convex and strongly closed, it follows that $u-g\in H^1_0(B_1,\omega)$, so that $u\in H^1_g(B_1,\omega)$.

Next, since $v_j$ converges in $\omega$-measure to $u\geq0$ in $B_1$, it follows that
$$\omega(\{u\neq0\}\cap B_1)=\omega(\{u>0\}\cap B_1)\leq\liminf_{k\to\infty}\omega(\{v_j>0\}\cap B_1).$$
Therefore, by the weak convergence of $\nabla v_j$ to $\nabla u$ and the fact that
$f_{\varepsilon}$ is a continuous, nondecreasing function,
$$J_{\varepsilon}(u,\omega,B_1) \le \liminf\limits_{j \to \infty} J_{\varepsilon} (v_j,\omega,B_1)=\mu$$
and $u$ attains the minimum in \eqref{penalized problem}. 
\end{proof}

\begin{lemma}[Boundedness of minimizers]\label{lem:boundedness}
If $u$ is the minimizer from Lemma \ref{lem:existencepenalized} then
$$\D(\omega(x)\nabla u)\geq0\qquad\hbox{in the weak sense in}~B_1.$$
In particular, by the maximum principle,
$$0 \le u\le \|g \|_{L^\infty(\partial B_1)}\qquad\hbox{a.e.\,in}~B_1.$$
\end{lemma}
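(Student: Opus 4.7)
The plan is to exploit the minimality of $u$ by testing against perturbations that only decrease $u$, so that they can only shrink the positivity set, rendering the penalty term harmless. Specifically, fix a nonnegative $\varphi\in H^1_0(B_1,\omega)$ and, for small $t>0$, consider $v_t:=(u-t\varphi)^+$. Since $g\ge c>0$ on $\partial B_1$ and $\varphi=0$ there in the sense of traces, for $t$ sufficiently small we have $u-t\varphi=g>0$ on $\partial B_1$, so that $v_t\in H_g^1(B_1,\omega)$ is an admissible competitor. Because $v_t\le u$, the positivity set shrinks, $\{v_t>0\}\subset\{u>0\}$, and by monotonicity of $f_\varepsilon$ the penalty term for $v_t$ does not exceed that of $u$.

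Consequently, the minimality $J_\varepsilon(u,\omega,B_1)\le J_\varepsilon(v_t,\omega,B_1)$ reduces to the pure Dirichlet comparison
$$\int_{B_1}|\nabla u|^2\omega\,dx\le\int_{B_1}|\nabla v_t|^2\omega\,dx.$$
Using $\nabla v_t=\chi_{\{u>t\varphi\}}(\nabla u-t\nabla\varphi)$ together with the standard fact that $\nabla u=0$ a.e.\ on $\{u=0\}$, this rearranges to
$$\int_{\{0<u\le t\varphi\}}|\nabla u|^2\omega\,dx\le-2t\int_{\{u>t\varphi\}}\omega\,\nabla u\cdot\nabla\varphi\,dx+t^2\int_{\{u>t\varphi\}}|\nabla\varphi|^2\omega\,dx.$$

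Dropping the nonnegative left-hand side, dividing by $t$, and sending $t\to 0^+$, I pass to the limit by dominated convergence, using that $\chi_{\{u>t\varphi\}}\to\chi_{\{u>0\}}$ pointwise a.e., that $\omega\,|\nabla u|\,|\nabla\varphi|$ is integrable by Cauchy--Schwarz in $H^1(B_1,\omega)$, and once more that $\nabla u$ vanishes a.e.\ on $\{u=0\}$. The result is
$$\int_{B_1}\omega\,\nabla u\cdot\nabla\varphi\,dx\le 0$$
for every nonnegative $\varphi\in H^1_0(B_1,\omega)$, which is precisely the weak statement $\D(\omega\nabla u)\ge 0$ in $B_1$.

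For the pointwise bound, set $M:=\|g\|_{L^\infty(\partial B_1)}$. Since $u=g\le M$ on $\partial B_1$ in the trace sense, $(u-M)^+\in H_0^1(B_1,\omega)$ is an admissible test function in the weak subsolution inequality just derived, giving $\int_{\{u>M\}}\omega|\nabla u|^2\,dx\le 0$. Hence $\nabla(u-M)^+\equiv 0$ a.e., and the weighted Poincar\'e inequality (Lemma \ref{lem:Poincare}) forces $(u-M)^+\equiv 0$ in $B_1$. The lower bound $u\ge 0$ is already built into Lemma \ref{lem:existencepenalized}. The main technical obstacle is the limit $t\to 0^+$, where one has to carefully track the cut locus $\{u>t\varphi\}$ and rely on $\nabla u=0$ a.e.\ on $\{u=0\}$ to identify the limit integral over $\{u>0\}$ with one over $B_1$; the rest reduces to the standard weak maximum principle for degenerate subsolutions.
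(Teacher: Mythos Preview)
Your proof is correct and follows the same idea as the paper: perturb $u$ downward so that the penalty term can only decrease, then pass to the limit in the Dirichlet comparison. The paper simplifies by using $u-t\phi$ directly (without truncating to the positive part), since the admissible class $H^1_g(B_1,\omega)$ for the penalized problem carries no sign constraint; this makes the expansion $|\nabla(u-t\phi)|^2=|\nabla u|^2-2t\,\nabla u\cdot\nabla\phi+t^2|\nabla\phi|^2$ immediate and avoids the characteristic-function bookkeeping and the dominated-convergence step altogether.
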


\begin{proof}
For a nonnegative function $\phi\in C_c^{\infty} (B_1)$ and any $t>0$, we have $u-t\phi\in H^1_g(B_1)$, so that,
by the minimality of $u$,
\begin{align*}
        0 &\le \frac{1}{2t} ( J_{\varepsilon}( u  - t\phi,\omega,B_1) - J_\varepsilon(u,\omega,B_1)) \\
        &=-\int_{B_1}\omega\nabla u \cdot \nabla \phi\,dx+\frac{t}{2}\int_{B_1}|\nabla\phi|^2\omega\,dx \\
        &\quad  + \frac{1}{2t} \big[ f_{\varepsilon}
        ( \omega (\{ u - t\phi > 0 \}\cap B_1 ) ) - f_{\varepsilon} ( \omega (\{ u > 0\}\cap B_1 ) ) \big].
\end{align*}
The last term above is nonpositive because $f_\varepsilon$ is nondecreasing.
Thus, by letting $t\to0$,
$$\int_{B_1} \omega \nabla u \cdot \nabla \phi\,dx  \leq 0$$
and $u$ is a subsolution in $B_1$ in the weak sense.
\end{proof}

We next prove that minimizers for \eqref{penalized problem} are H\"older continuous by utilizing the scaling features of the problem and showing that we can normalize the minimizer by considering a modified functional $\tilde{J}_{\varepsilon}$. First, we localize with the following definition. 

\begin{definition}
\label{def:localmin}
Given a ball $B$, we say that $u$ is a \emph{local minimizer in $B$} of a functional of the form
  \[
  J(v,\omega,B): = \int_{B} |\nabla v|^2\omega\,dx +f(\omega(\{ v > 0\}\cap B))
  \]
if for every subdomain $E \subset B$, $u$ minimizes 
  \[
  \int_{E} |\nabla v|^2\omega\,dx +f(\omega(\{ v > 0\}\cap E)+ \omega(\{ u>0\} \cap (B \setminus E)) )
  \]
among all functions $v$ for which $v-u\in H^1_0(E,\omega)$. 
\end{definition}

Note that the term $\omega(\{ u>0\} \cap (B \setminus E))$ is a constant in the minimization in Definition \ref{def:localmin} and that a minimizer $u$ of $J_{\varepsilon} (u,\omega,B_1)$
is a local minimizer in $B_1$. Indeed, if there were a set $E$ and a function $v$ on $E$ for which the above functional is smaller than for $u$, then one could take $w = u$ in $B\setminus E$ and $w = v$ in $E$ to get $J_{\varepsilon}(w, \omega, B_1) < J_{\varepsilon}(u, \omega, B_1)$. By taking $E=B_1$, any local minimizer is a (global) minimizer, however it may not satisfy our imposed boundary condition so we do not use this latter property.

For $x_0 \in B_{1/2}$, $ 0 < r < 1/2$ and $\delta> 0$, we define the rescaled function 
$$\tilde{u}(y) = \frac{u(x_0 + ry) }{\delta}\qquad\hbox{for}~y\in B_1.$$
Recall that the weight $\tilde{\omega}(y):=\omega(x_0 + ry)$ has the same $A_2$ constant than $\omega$.
If we let $\tilde c = \omega(\{u>0\} \cap (B_1 \setminus B_r(x_0)))$ and 
$$\tilde{f}_{\varepsilon}(t):= r^{-n} f_{\varepsilon}(r^n t + \tilde c) = \frac{1}{\varepsilon}\left(t + \frac{\tilde c-m}{r^n} \right) ^+\qquad\hbox{for}~t\geq0$$
then, by changing variables, we get that
\begin{equation}\label{eq:rescaledfunctional}
\begin{aligned}
\int_{B_r(x_0)} |\nabla u|^2 \omega\,dx
&+f_{\varepsilon}(\omega(\{ u > 0 \}\cap B_r(x_0))+\tilde c) \\
&= \int_{B_1} \delta^2 r^{n-2} | \nabla\tilde{u}|^2 \tilde{\omega}\,dy + f_{\varepsilon}( r^n \tilde{\omega} ( \{\tilde{u}> 0 \}\cap B_1 )+\tilde c)  \\
&= \delta^2 r^{n-2} \bigg[ \int_{B_1}  | \nabla \tilde{u}|^2 \tilde{\omega}\,dy
+\delta^{-2} r^2 \tilde{f}_{\varepsilon}(\tilde{\omega}( \{ \tilde{u} > 0 \}\cap B_1 ))\bigg].
\end{aligned}
\end{equation}
Therefore, $\tilde{u}$ is a local minimizer in $B_1$ of the functional 
\begin{equation}\label{eq:Jepstilde}
\tilde{J}_{\varepsilon}(v,\tilde{\omega},B_1) = \int_{B_1}  | \nabla v|^2 \tilde{\omega}\, dx
+ \eta \tilde{f}_{\varepsilon} (\tilde{\omega}( \{ v>0  \} \cap B_1))
\end{equation}
where $ \eta=(r/\delta)^2> 0$, and  $\tilde \omega$ is an $A_2$ weight.

\begin{theorem}\label{reg under smallness}
Let $0<\alpha<1$ be as in Theorem \ref{continuity of omega-harmonic}.
There exist constants $0<\eta_0<1$ and $0<\lambda<1/2$ depending only on $n$, $[\tilde\omega]_{A_2}$
and $\varepsilon$ such that, for any $0<\eta<\eta_0$ and any local minimizer $\tilde u$ of \eqref{eq:Jepstilde} satisfying
$$\frac{1}{\tilde \omega(B_1)}\int_{B_1}|\tilde{u}|^2\tilde{\omega}\,dx \leq 1$$
we have 
$$|\tilde{u} (x) - \tilde{u} (0) | \le C |x|^{\alpha/2}$$
for all $x \in B_{\lambda}$, where $C>0$ depends only $n$, $[\tilde{\omega}]_{A_2}$, $\eta_0$ and $\lambda$. 
\end{theorem}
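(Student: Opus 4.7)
\emph{Strategy.} The plan is to establish a Campanato-type $L^2(\tilde\omega)$-oscillation decay at dyadic scales around the origin, following Caffarelli's blow-up and approximation-by-constants technique. The central device is to compare $\tilde u$ on a subball with its $\tilde\omega$-harmonic replacement, exploiting the $\alpha$-Hölder regularity from Theorem \ref{continuity of omega-harmonic} and the fact that the penalty $\eta\tilde f_\varepsilon$ is a small perturbation of the Dirichlet energy when $\eta$ is small.

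\emph{One-step decay.} Let $h$ be the unique weak solution of $\D(\tilde\omega\nabla h)=0$ in $B_{1/2}$ with $h-\tilde u\in H^1_0(B_{1/2},\tilde\omega)$. Testing the local minimality of $\tilde u$ (in the sense of Definition \ref{def:localmin}) against the competitor that equals $h$ in $B_{1/2}$ and $\tilde u$ elsewhere, and using that $\tilde f_\varepsilon$ is Lipschitz with constant $1/\varepsilon$, the standard orthogonality identity for the harmonic replacement gives
\[
\int_{B_{1/2}}|\nabla(\tilde u-h)|^2\tilde\omega\,dx\leq\frac{\eta}{\varepsilon}\tilde\omega(B_1).
\]
Since $\tilde u-h\in H^1_0(B_{1/2},\tilde\omega)$, the weighted Poincaré inequality (Lemma \ref{lem:Poincare}) upgrades this to $\int_{B_{1/2}}(\tilde u-h)^2\tilde\omega\,dx\leq C\eta/\varepsilon$. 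Because $\tilde u\geq0$ and $\tilde u=h$ on $\partial B_{1/2}$, the maximum principle applied to the subsolution $\tilde u-h$ yields $0\leq h$ in $B_{1/2}$; combined with the normalization and the $L^2$-closeness, this bounds $\|h\|_{L^2(B_{1/2},\tilde\omega)}$ independently of $\eta<\eta_0$. Theorem \ref{continuity of omega-harmonic} applied to $h$ on $B_{1/2}$ then gives $|h(x)-h(0)|\leq C|x|^{\alpha}$ for $x\in B_{1/4}$. Combining both estimates with the lower doubling bound from Lemma \ref{lem:propertiesofA2}(1), which yields $\tilde\omega(B_1)/\tilde\omega(B_\lambda)\leq C_1\lambda^{-2n}$, produces for any $0<\lambda<1/4$
\[
\frac{1}{\tilde\omega(B_\lambda)}\int_{B_\lambda}|\tilde u-h(0)|^2\tilde\omega\,dx\leq C\lambda^{2\alpha}+\frac{C\eta}{\varepsilon\lambda^{2n}}.
\]
Fixing $\lambda$ small so that $C\lambda^{2\alpha}\leq\tfrac{1}{2}\lambda^{\alpha}$ and then $\eta_0$ small so that $C\eta_0/(\varepsilon\lambda^{2n})\leq\tfrac{1}{2}\lambda^{\alpha}$ gives the one-step decay $\frac{1}{\tilde\omega(B_\lambda)}\int_{B_\lambda}|\tilde u-h(0)|^2\tilde\omega\leq\lambda^{\alpha}$, which is exactly the Campanato condition compatible with a target Hölder exponent $\alpha/2$.

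\emph{Iteration and main obstacle.} Using the scaling identity \eqref{eq:rescaledfunctional}, the rescaled function $\bar u(y)=(\tilde u(\lambda y)-h(0))/\lambda^{\alpha/2}$ on $B_1$ inherits the correct normalization, and its effective penalty parameter becomes $\bar\eta=\eta\lambda^{2-\alpha}<\eta_0$, which would in principle permit indefinite iteration. The main obstacle is that subtracting the positive constant $h(0)$ alters the positivity set: $\{\bar u>0\}$ corresponds to $\{\tilde u>h(0)\}$ rather than $\{\tilde u>0\}$, so $\bar u$ is not automatically a local minimizer of a functional of the same form. I would resolve this via a dichotomy at each dyadic scale $\lambda^k$: if the approximating constant $h_k(0)$ is comparable to or larger than the expected oscillation $\lambda^{k\alpha/2}$, then the $L^2$-closeness of $\tilde u$ to $h_k$ combined with the Harnack inequality in Theorem \ref{continuity of omega-harmonic} forces $\tilde u>0$ on a full neighborhood of the origin, where $\tilde u$ is $\tilde\omega$-harmonic and Theorem \ref{continuity of omega-harmonic} gives the desired Hölder control directly; otherwise $h_k(0)$ is absorbed into the error, one may take $c_k=0$ as approximating constant, and the rescaling preserves the free boundary structure exactly so the iteration proceeds. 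Summing the resulting Campanato decays over the scales $\lambda^k$ produces a continuous representative of $\tilde u$ satisfying $|\tilde u(x)-\tilde u(0)|\leq C|x|^{\alpha/2}$ for $x\in B_\lambda$.
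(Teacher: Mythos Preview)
Your one-step decay is correct and essentially identical to the paper's: compare $\tilde u$ with its $\tilde\omega$-harmonic replacement $h$ on $B_{1/2}$, use the $1/\varepsilon$-Lipschitz property of the penalty to bound the energy gap by $\frac{\eta}{\varepsilon}\tilde\omega(B_1)$, apply Poincar\'e, then split $\tilde u-h(0)$ into $(\tilde u-h)+(h-h(0))$ and use Theorem~\ref{continuity of omega-harmonic} for the second piece. Your choice of $\lambda$ and $\eta_0$ also matches the paper's.

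The gap is in the iteration. The ``obstacle'' you identify---that subtracting the constant $a_k=h(0)$ shifts the positivity set---is not an obstacle at all, and the paper's proof bypasses it entirely. After rescaling, the function $u_k(y)=(\tilde u(\lambda^k y)-a_k)/\lambda^{k\alpha/2}$ is a local minimizer of a functional of the form
\[
\int_{B_1}|\nabla v|^2\omega_k\,dy+\eta\lambda^{2k(1-\alpha/2)}\,f^k_\varepsilon\big(\omega_k(\{v+\lambda^{-k\alpha/2}a_k>0\}\cap B_1)\big),
\]
where the penalty now measures the superlevel set $\{v>-\lambda^{-k\alpha/2}a_k\}$ rather than $\{v>0\}$. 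The crucial point is that $f^k_\varepsilon$ is still $1/\varepsilon$-Lipschitz, and when comparing $u_k$ with its harmonic replacement $h_k$ the two superlevel sets differ only inside $B_{1/2}$, so the penalty difference is bounded by $\frac{1}{\varepsilon}\omega_k(B_{1/2})$ \emph{regardless of the threshold}. Hence your one-step argument applies verbatim at every scale, with effective parameter $\eta\lambda^{2k(1-\alpha/2)}\leq\eta_0$, and the induction closes cleanly without any dichotomy.

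Your proposed dichotomy, besides being unnecessary, also has a genuine hole in the ``large $h_k(0)$'' branch: $L^2(\omega_k)$-closeness of $u_k$ to $h_k$ together with Harnack for $h_k$ does not force $u_k>0$ (equivalently $\tilde u>0$) pointwise on any ball. The only pointwise comparison available is $u_k\leq h_k$ from the subsolution property, which goes the wrong way. So that branch does not furnish the claimed $\tilde\omega$-harmonicity of $\tilde u$ near the origin.
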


\begin{proof}
We begin by setting up the constants. Let
$$C_\ast:=\frac{C_{P,\tilde{\omega}}}{4\varepsilon}D$$
where $C_{P,\tilde{\omega}}$ is the Poincar\'e constant in \eqref{Poincare ineq}
and $D$ is the doubling constant in Lemma \ref{lem:propertiesofA2}$(2)$ for $\tilde{\omega}$.
First, we choose a radius $0 < \lambda < 1/4$ such that 
\begin{equation}\label{choice of radius}
4\bar{C}^2(D+1) \lambda^{2\alpha}\le\frac{1}{2} \lambda^{\alpha} 
\end{equation}
where $\bar{C}$ and $\alpha$ are the constants given in Theorem \ref{continuity of omega-harmonic}.
Next, we fix $0<\eta_0<1$ satisfying
\begin{equation}\label{choice of eta}
\eta_0\leq\min\left\{ \frac{1}{C_\ast\lambda^{2(1- \alpha/2)}} , \frac{c_1}{4C_\ast}\lambda^\alpha \left( \frac{|B_{\lambda}|}{|B_{1/2}|}\right)^2 \right\}
\end{equation}
where $c_1 > 0$ is the constant from Lemma \ref{lem:propertiesofA2}$(1)$ for $\tilde{\omega}$. 

We prove that there exists a sequence of real numbers $\{a_j\}_{j\geq0}$ such that 
\begin{equation}\label{a_k decay}\tag*{$(i)_j$}
\frac{1}{\tilde{\omega}(B_{\lambda^j})}\int_{B_{\lambda^j}} |\tilde u - a_j |^2 \tilde \omega \,dx \le \lambda^{j \alpha} 
\end{equation}
and
\begin{equation}\label{a_k convergence}\tag*{$(ii)_j$}
|a_{j} - a_{j-1} | \le \bar{C}\lambda^{(j-1)\frac{\alpha}{2}}
\end{equation}
for all $j \ge 0$, with the convention that $a_0 = a_{-1} = 0$.  From here, the conclusion follows
using standard arguments (see, for instance, \cite{Stinga_2024})
as Campanato's characterization of H\"older spaces also
holds for metric measure spaces with doubling measures (see \cite{Gorka}).

The proof of \ref{a_k decay}--\ref{a_k convergence} is by induction.
Since, by hypothesis, $\tilde u$ is normalized in $L^2(B_1,\tilde{\omega})$ and $a_0 = a_{-1} = 0$,
we get that \ref{a_k decay}--\ref{a_k convergence} is true for $j=0$.
Suppose that \ref{a_k decay}--\ref{a_k convergence} have been proved for $j = 0, 1, \ldots,  k$.
Define $u_k : B_1 \to \mathbb{R}$ by 
$$u_k(x): = \frac{\tilde u(\lambda^k x) - a_k}{\lambda^{\frac{k \alpha}{2}}}.$$
As in \eqref{eq:rescaledfunctional},
$u_k$ is a local minimizer in $B_1$ of 
$$J_{\varepsilon}^k (v,\omega_k,B_1):=\int_{B_1} |\nabla v|^2 \omega_k\,dx
+ \eta\lambda^{2k(1 - \frac{\alpha}{2})} f_{\varepsilon}^k ( \omega_k (\{ v + \lambda^{-k\alpha/2} a_k > 0 \}\cap B_1 ))$$
where $\omega_k(x):=\tilde \omega(\lambda^k x)$ has the same $A_2$ constant as $\tilde\omega$ and $f_{\varepsilon}^k(t) := \lambda^{-kn} \tilde f_{\varepsilon}(\lambda^{kn} t + \tilde c_k)$, with $\tilde c_k = \tilde \omega(\{ \tilde u>0\} \cap (B_1 \setminus B_{\lambda^k}))$.
 
Let $h_k$ be defined as $h_k=u_k$ on $B_1 \setminus B_{1/2}$ and extended into $B_{1/2}$ as the unique weak solution to
\begin{equation}\label{eq:weakformhuk}
\begin{cases}
\D(\omega_k(x)\nabla h) = 0&\hbox{in}~B_{1/2} \\
h = u_k&\hbox{on}~\partial B_{1/2}.
\end{cases}
\end{equation}
By Theorem \ref{continuity of omega-harmonic}, $h_k$ is locally H\"older continuous in $B_{1/2}$.

We claim that 
\begin{equation}\label{eq:ukminushk}
\frac{1}{\omega_k(B_{1/2})}\int_{B_{1/2}}|u_k-h_k|^2\omega_k\,dx\leq C_\ast\eta\lambda^{2k(1-\frac{\alpha}{2})}.
\end{equation}
For this, first notice that $h_k$ is a competitor for $u_k$ so 
\begin{multline*}
   \int_{B_{1/2}} |\nabla u_k |^2 \omega_k\, dx \le \int_{B_{1/2}} |\nabla h_k |^2 \omega_k\, dx \\+
   \eta\lambda^{2k(1 - \frac{\alpha}{2})}\big(f_{\varepsilon}^k(
   \omega_{k}( \{ h_k  > - a_k\lambda^{-k \alpha/2} \}\cap B_1)-f_{\varepsilon}^k(
   \omega_{k}( \{ u_k  > - a_k\lambda^{-k \alpha/2} \}\cap B_1)\big).
   \end{multline*}
Since $f^k_{\varepsilon}$ is Lipschitz with Lipschitz constant $1/\varepsilon$, we get the following, where we use the doubling property of $\omega_k$ (see Lemma \ref{lem:propertiesofA2}$(2)$) in the second line:
   \begin{align*}
   \int_{B_{1/2}} |\nabla u_k |^2 \omega_k\, dx 
  % &\le \int_{B_{1/2}} |\nabla h_k |^2 \omega_k\, dx +\frac{\eta\lambda^{2k(1 - \frac{\alpha}{2})}}{\varepsilon} \big|\omega_{k}( \{ h_k  > - a_k\lambda^{-k \alpha/2} \}\cap B_1)\\
  % & \quad -
  % \omega_{k}( \{ u_k  > - a_k\lambda^{-k \alpha/2} \}\cap B_1)\big|\\
   &\le \int_{B_{1/2}} |\nabla h_k |^2 \omega_k\, dx +\frac{\eta\lambda^{2k(1 - \frac{\alpha}{2})}}{\varepsilon} \omega_k(B_1)
   \\
   &\le \int_{B_{1/2}} |\nabla h_k |^2 \omega_k\, dx +\frac{\eta\lambda^{2k(1 - \frac{\alpha}{2})}}{\varepsilon}
    D\omega_k (B_{1/2}).
\end{align*}
From here, by the Poincar\'e inequality \eqref{Poincare ineq} (note that $C_{P,\omega_k}=C_{P,\tilde{\omega}}$) and
by using $u_k-h_k$ as test function in \eqref{eq:weakformhuk},
\begin{align*}
\int_{B_{1/2}}|u_k-h_k|^2&\omega_k\,dx
\leq \frac{C_{P,\omega_k}}{4}\int_{B_{1/2}} |\nabla(u_k - h_k)|^2 \omega_k\,dx \\
&= \frac{C_{P,\omega_k}}{4}\int_{B_{1/2}}\big(|\nabla u_k|^2 - \nabla u_k \cdot \nabla h_k \big)\omega_k\,dx\\
&\le \frac{C_{P,\omega_k}}{4}\int_{B_{1/2}}\big(|\nabla h_k|^2 - \nabla u_k \nabla h_k\big)\omega_k\,dx
+\frac{C_{P,\omega_k}}{4}\frac{\eta\lambda^{2k(1-\frac{\alpha}{2})}}{\varepsilon}D\omega_k(B_{1/2}) \\
&= C_\ast\eta\lambda^{2k(1-\frac{\alpha}{2})}\omega_k(B_{1/2})
\end{align*}
and \eqref{eq:ukminushk} is proved. Now, since by induction hypothesis and the doubling property we have
$$\int_{B_1}|u_k|^2  \omega_k \,dx \le D\omega_k(B_{1/2}),$$
by applying \eqref{eq:ukminushk}, we can estimate
\begin{equation}\label{average of h_k}
\begin{aligned}
\int_{B_{1/2}}|h_k|^2\omega_k\,dx
&\leq 2\int_{B_{1/2}}|h_k-u_k|^2\omega_k\,dx + 2\int_{B_{1/2}}|u_k|^2\omega_k\,dx \\
&\leq 2(C_\ast\eta\lambda^{2k(1-\frac{\alpha}{2})}+D)\omega_k(B_{1/2}) \\
&\leq 2(D+1)\omega_k(B_{1/2}),
\end{aligned}
\end{equation}
where we used that $\eta \le \eta_0$ and \eqref{choice of eta}.

To prove the induction step, let $a_{k+1}: = a_k + \lambda^{k\frac{\alpha}{2}} h_k(0)$.
Then, by using that $\tilde\omega(B_{\lambda^{k+1}}) =  \lambda^{kn} \omega_k(B_{\lambda}) $
and \eqref{eq:ukminushk}, Theorem \ref{continuity of omega-harmonic}, \eqref{average of h_k}
and Lemma \ref{lem:propertiesofA2}$(1)$,
\begin{align*}
\frac{1}{\tilde{\omega}(B_{\lambda^{k+1}})}\int_{B_{\lambda^{k+1}}}&|u - a_{k+1}|^2\tilde\omega\,dx
%&= \frac{1}{\omega_k(B_\lambda)} \int_{B_{\lambda}} |u(\lambda^k x) - a_{k+1}|^2 \tilde\omega(\lambda^k x)\,dx  \\
= \frac{\lambda^{k\alpha}}{\omega_k(B_\lambda)}\int_{B_{\lambda}}|u_k(x) - h_k (0) |^2\omega_k\,dx  \\
&\le \frac{2\lambda^{k\alpha}}{\omega_k(B_\lambda)}
\Bigg[\int_{B_{\lambda}}  |u_k - h_k|^2 \omega_k\, dx + \int_{B_{\lambda}} |h_k - h_k(0)|^2 \omega_{k} \,dx  \Bigg]  \\
&\le \lambda^{k\alpha}\Bigg[\frac{2C_\ast\eta\lambda^{2k(1 - \frac{\alpha}{2})}\omega_k(B_{1/2})}{
\omega_k (B_{\lambda})} + 4\bar{C}^2 (D+1) \lambda^{2\alpha} \Bigg] \\
&\leq \lambda^{k\alpha}\Bigg[\frac{2C_\ast\eta_0|B_{1/2}|^2}{c_1|B_{\lambda}|^2} + 4\bar{C}^2 (D+1) \lambda^{2\alpha} \Bigg] \\
&\leq \lambda^{(k+1)\alpha}
\end{align*}
where in the last line we used \eqref{choice of radius} and \eqref{choice of eta}.
This proves \ref{a_k decay} for $j = k+1$.  To see that \ref{a_k convergence} holds for $k+1$, we note 
that $|a_{k+1} - a_k| = |h_{k}(0)| \lambda^{k\alpha/2} \le\bar{C} \lambda^{k\alpha/2}$.
\end{proof}
 
\begin{theorem}[Regularity] \label{thm-regularity}
Any minimizer  $u$ for \eqref{penalized problem} is locally H\"older continuous in $B_1$
and, for any compact set $K \subset B_1$,
$$[u]_{C^{\alpha/2} (K)} \le C \|  u\|_{L^2 (B_1,  \omega)}$$
where $0<\alpha<1$ is as in Theorem \ref{continuity of omega-harmonic} and
$C > 0$ depends only on $K$, $n$ and $[\omega]_{A_2}$. 
\end{theorem}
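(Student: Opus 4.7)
My plan is to reduce the general minimizer to the normalized setting of Theorem \ref{reg under smallness} via the rescaling developed right before its statement. Fix a compact set $K \subset B_1$ and set $r_0 := \tfrac{1}{2}\operatorname{dist}(K,\partial B_1) > 0$. For each $x_0 \in K$ and a parameter $\delta > 0$ still to be chosen, I will consider
$$\tilde u(y) := \frac{u(x_0 + r_0 y)}{\delta}, \qquad \tilde\omega(y) := \omega(x_0 + r_0 y), \qquad y \in B_1.$$
Computation \eqref{eq:rescaledfunctional} shows that $\tilde u$ is a local minimizer in $B_1$ of $\tilde J_\varepsilon(\,\cdot\,,\tilde\omega,B_1)$ with $\eta = (r_0/\delta)^2$, and $[\tilde\omega]_{A_2} = [\omega]_{A_2}$.

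To apply Theorem \ref{reg under smallness}, both $\eta < \eta_0$ and the normalization $\tilde\omega(B_1)^{-1}\int_{B_1}|\tilde u|^2\tilde\omega\,dy \le 1$ must hold. A direct change of variables rewrites the second as $\delta^2 \ge \omega(B_{r_0}(x_0))^{-1}\int_{B_{r_0}(x_0)}|u|^2\omega\,dx$, so I set
$$\delta^2 := \frac{r_0^2}{\eta_0} + \frac{1}{\omega(B_{r_0}(x_0))}\int_{B_{r_0}(x_0)}|u|^2\omega\,dx,$$
which enforces both hypotheses simultaneously. Theorem \ref{reg under smallness} then yields $|\tilde u(y) - \tilde u(0)| \le C|y|^{\alpha/2}$ for $|y|\le\lambda$, and undoing the rescaling via $x = x_0 + r_0 y$ gives
$$|u(x) - u(x_0)| \le \frac{C\delta}{r_0^{\alpha/2}}\,|x - x_0|^{\alpha/2} \quad \text{for all } |x - x_0| \le \lambda r_0.$$

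To turn this pointwise statement into the claimed seminorm estimate on $K$, I will bound $\delta$ uniformly in $x_0$. The doubling property (Lemma \ref{lem:propertiesofA2}$(2)$) and compactness of $K$ produce a constant $c_K > 0$, depending only on $K$, $n$, and $[\omega]_{A_2}$, with $\omega(B_{r_0}(x_0)) \ge c_K$ for every $x_0 \in K$; hence
$$\frac{\delta}{r_0^{\alpha/2}} \;\le\; \frac{r_0^{1-\alpha/2}}{\sqrt{\eta_0}} + \frac{\|u\|_{L^2(B_1,\omega)}}{r_0^{\alpha/2}\sqrt{c_K}}.$$
I then cover $K$ by finitely many balls of radius $\lambda r_0 / 2$ and chain the local estimates, using the $L^\infty$ bound from Lemma \ref{lem:boundedness} to handle pairs $(x,x_0)$ with $|x-x_0| > \lambda r_0$. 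This delivers the Hölder seminorm bound on $K$.

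The main obstacle I anticipate is the bookkeeping to express the final constant purely in terms of $\|u\|_{L^2(B_1,\omega)}$ with the advertised dependence $C = C(K,n,[\omega]_{A_2})$: the smallness threshold $\eta_0$ depends on $\varepsilon$ through $C_\ast$, so the constant inherits a hidden $\varepsilon$ dependence, and the additive piece $r_0^{1-\alpha/2}/\sqrt{\eta_0}$ in $\delta$ must be absorbed into a multiple of $\|u\|_{L^2(B_1,\omega)}$. This absorption is ultimately a consequence of the boundary condition $g \ge c > 0$, which via a trace and weighted Poincaré argument yields a positive lower bound on $\|u\|_{L^2(B_1,\omega)}$, turning the additive error into a multiplicative one.
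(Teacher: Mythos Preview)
Your approach matches the paper's: rescale at each interior point so that Theorem~\ref{reg under smallness} applies, then undo the scaling. The only difference is in how the parameters are chosen. Rather than fixing the radius from $K$ and loading the normalization into $\delta$, the paper fixes $\eta<\eta_0$ first and lets the \emph{radius} carry the $L^2$ norm, taking
\[
r=\min\!\Bigl(\tfrac{1}{10}\operatorname{dist}(x_0,\partial B_1),\ \Bigl(\frac{\eta}{\omega(B_1)}\int_{B_1}|u|^2\omega\,dx\Bigr)^{1/2}\Bigr),\qquad \delta=\frac{r}{\sqrt{\eta}}.
\]
With this choice $\delta$ is automatically bounded by a multiple of $\|u\|_{L^2(B_1,\omega)}$, so the rescaled estimate is immediately multiplicative in $\|u\|_{L^2(B_1,\omega)}$ and your absorption step via a lower bound on $\|u\|_{L^2}$ coming from the boundary data $g\ge c>0$ is unnecessary (and would otherwise smuggle a dependence on $g$ into $C$). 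Your observation that $\eta_0$, and hence the final constant, inherits an $\varepsilon$-dependence through $C_\ast$ is correct and applies equally to the paper's argument.
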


\begin{proof}
Let $\eta_0>0$ be as in Theorem \ref{reg under smallness}.
Fix  $0<\eta<\eta_0$ and $x_0 \in B_1$. Define 
$$\tilde{u}(x) = \frac{u(x_0 + rx)}{\delta}\qquad\hbox{for}~x\in B_1$$
where 
$$r = \min \left( \frac{\operatorname{dist}(x_0, \partial B_1)}{10}, \Bigg(\frac{\eta}{\omega(B_1)}\int_{B_1}|u|^2\omega\, dx\Bigg)^{1/2}  \right)\qquad\hbox{and}\qquad\delta = \frac{r}{\sqrt{\eta}}. 
$$
Then $\tilde{u}$ satisfies the hypotheses of Theorem \ref{reg under smallness}, so that
$|\tilde{u}(y)-\tilde{u}(0) | \le C |y|^{\alpha/2}$
whenever $|y| < \lambda$. By rescaling back to $u$, the conclusion follows.
\end{proof}

\begin{corollary}
If $u\geq0$ is a local minimizer of \eqref{penalized functional} then $\D(\omega(x)\nabla u)=0$ in the
weak sense in the open set $\{ u > 0\} \cap B_1$.
\end{corollary}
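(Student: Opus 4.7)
The plan is to exploit the fact that, by Theorem \ref{thm-regularity}, the minimizer $u$ is continuous, so the set $\Omega_u := \{u > 0\} \cap B_1$ is open. Inside $\Omega_u$ one can perform arbitrary smooth perturbations without changing the measure of the positivity set, which kills the penalty term and reduces the problem to a standard Euler--Lagrange computation.

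Concretely, fix any test function $\phi \in C_c^\infty(\Omega_u)$ and let $K := \operatorname{supp}\phi$, which is compact in $\Omega_u$. By continuity of $u$ there exists $c_0 > 0$ with $u \geq c_0$ on $K$. Choose a ball $E \subset B_1$ containing $K$ and take $t \in \mathbb{R}$ with $|t|$ small enough that $\|t\phi\|_{L^\infty} \leq c_0/2$. Then $u + t\phi \geq c_0/2 > 0$ on $K$, while $u + t\phi = u$ outside $K$. Hence $\{u + t\phi > 0\} \cap E = \{u > 0\} \cap E$, so that
\[
\omega(\{u+t\phi>0\}\cap E) + \omega(\{u>0\}\cap(B_1\setminus E)) = \omega(\{u>0\}\cap B_1),
\]
and the $f_\varepsilon$ contribution in Definition \ref{def:localmin} is unchanged. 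Since $u + t\phi - u \in H^1_0(E,\omega)$, the local minimality of $u$ in $E$ gives
\[
\int_E |\nabla u|^2\omega\,dx \leq \int_E |\nabla(u+t\phi)|^2\omega\,dx
\]
for all such $t$.

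Differentiating the quadratic in $t$ at $t=0$ (or equivalently using both signs of $t$) yields
\[
\int_{B_1}\omega\,\nabla u\cdot\nabla\phi\,dx = 0
\]
for every $\phi \in C_c^\infty(\Omega_u)$. This is exactly the weak formulation of $\operatorname{div}(\omega(x)\nabla u)=0$ on the open set $\Omega_u$. Since the support condition restricts $\phi$ to the open set where $u>0$, no boundary or free-boundary term appears. There is no real obstacle here beyond being careful that $t$ is chosen small enough (depending on $c_0$ and $\|\phi\|_{L^\infty}$) to preserve the sign on $K$; continuity of $u$, already established in Theorem \ref{thm-regularity}, is what makes this uniform lower bound on $K$ available and is the only nontrivial ingredient being imported.
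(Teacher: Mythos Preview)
Your proof is correct and follows essentially the same approach as the paper: use the continuity from Theorem~\ref{thm-regularity} to make $\{u>0\}$ open, then perturb by compactly supported test functions so that the positivity set (and hence the penalty term) is unchanged, reducing to the quadratic Dirichlet energy. The only cosmetic difference is that the paper treats sub- and supersolution separately (invoking Lemma~\ref{lem:boundedness} for the former and nonnegative $\psi$ with $t>0$ for the latter), whereas you handle both signs of $t$ at once via the uniform lower bound $u\geq c_0$ on $K$.
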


\begin{proof}
As a consequence of the local H\"older regularity in Theorem \ref{thm-regularity},
$\{ u > 0 \}$ is an open set in $B_1$. 
By Lemma \ref{lem:boundedness}, $u$ is a subsolution in $B_1$. To check that $u$ is a supersolution in $\{u>0\}$,
consider a nonnegative $\psi\in C^\infty_c(\{u>0\})$ and any $t>0$.
Since $\{u > 0 \} = \{ u + t\psi> 0\}$,
$$0 \le \frac{1}{2t}( J_{\varepsilon}( u  + t\psi,\omega,B_1) - J_\varepsilon(u,\omega,B_1))=\int_{B_1}\omega\nabla u \cdot \nabla \psi\,dx+\frac{t}{2}\int_{B_1}|\nabla\psi|^2\omega\,dx$$
and, by letting $t\to0$,
  \[
  \int_{\{u>0\}}\omega\nabla u\cdot\nabla\psi\,dx\geq0. \qedhere \]
\end{proof}

%%%%%%%%%%%%%%%%%%%%%%%%%%%%%%%%%%%%%%%%%%%%%%%%%%
\section{Behavior  for small $\varepsilon$}\label{small eps sct}
%%%%%%%%%%%%%%%%%%%%%%%%%%%%%%%%%%%%%%%%%%%%%%%%%%

In this section, we prove the existence part of Theorem \ref{thm:main}.
By the results of Section \ref{general prop sct}, the regularity properties of the minimizer
stated in Theorem \ref{thm:main} hold.

The main result of this section is the following.

\begin{theorem}
Let $\omega$ be an $A_2$ weight satisfying either the isoperimetric inequality \eqref{eq:isoperimetricassumption} or 
the lower bound \eqref{eq:lowerboundassumption}.
Then there exists $0 < \varepsilon_*<1$ such that if $u_* \in H_g^1 (B_1, \omega)$ is a nonnegative
minimizer for \eqref{penalized problem} as in Section \ref{general prop sct} then 
$$\omega(\{ u_* > 0 \}\cap B_1 ) = m.$$
In particular, $u_\ast$ is a minimizer for \eqref{weighted prob}.
\end{theorem}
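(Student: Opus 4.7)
The plan is a contradiction argument. I would suppose $\omega(\{u_\ast>0\})\neq m$ for some minimizer $u_\ast$ of \eqref{penalized problem}, and split according to the sign of $\omega(\{u_\ast>0\})-m$. Two uniform preliminaries are used throughout. First, comparing with the competitor $v_r$ from Lemma \ref{lem:existencepenalized} with $r$ chosen so that $\omega(B_1\setminus B_r)=m$ (so that $J_\varepsilon(v_r)=\int_{B_1}|\nabla v_r|^2\omega\,dx=:C_0$), minimality of $u_\ast$ yields the uniform energy bound $\int_{B_1}|\nabla u_\ast|^2\omega\,dx\le C_0$ and the nearly-sharp volume bound $\omega(\{u_\ast>0\})\le m+C_0\varepsilon$. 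Second, by standard boundary regularity for degenerate elliptic problems together with $g\ge c>0$ on $\partial B_1$, there is $\rho_0>0$ depending only on the data such that $u_\ast\ge c/2$ on $B_1\setminus B_{1-2\rho_0}$.

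\emph{Case 1: $\omega(\{u_\ast>0\})<m$.} For any $\phi\in C_c^\infty(B_1)$ with $\omega(\mathrm{supp}\,\phi)<m-\omega(\{u_\ast>0\})$ and $|s|$ sufficiently small, the competitor $u_s:=(u_\ast+s\phi)^+$ has $\omega(\{u_s>0\})<m$, and thus $J_\varepsilon(u_s)=\int_{B_1}|\nabla u_s|^2\omega\,dx\le\int_{B_1}|\nabla(u_\ast+s\phi)|^2\omega\,dx$. Minimality of $u_\ast$ combined with expansion in $s$ around $s=0$ forces $\int_{B_1}\nabla u_\ast\cdot\nabla\phi\,\omega\,dx=0$, which extends to all $\phi\in C_c^\infty(B_1)$ by decomposing $\phi$ via a partition of unity subordinate to a cover of $\mathrm{supp}\,\phi$ by balls of sufficiently small $\omega$-measure. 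Thus $u_\ast$ is $\omega$-harmonic in $B_1$, so by uniqueness $u_\ast$ is the $\omega$-harmonic extension of $g$, which by the weak maximum principle (a consequence of Theorem \ref{continuity of omega-harmonic}) satisfies $u_\ast\ge\inf_{\partial B_1}g>0$ in $B_1$. Hence $\omega(\{u_\ast>0\})=\omega(B_1)>m$, contradicting the hypothesis.

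\emph{Case 2: $\omega(\{u_\ast>0\})>m$.} Set $\delta:=\omega(\{u_\ast>0\})-m\in(0,C_0\varepsilon]$. Take a radial cutoff $\eta\in C_c^\infty(B_1)$ with $\eta\equiv 1$ on $B_{1-2\rho_0}$ and $\mathrm{supp}\,\eta\subset B_{1-\rho_0}$, and consider the competitor $u_t:=(u_\ast-t\eta)^+$ for $0<t<c/4$. The boundary lower bound guarantees $u_t=u_\ast$ near $\partial B_1$, so $u_t\in H_g^1(B_1,\omega)$, and
\[
\omega(\{u_\ast>0\})-\omega(\{u_t>0\})=\omega(\{0<u_\ast\le t\}\cap B_{1-2\rho_0})=:\alpha_t.
\]
The hypothesis on $\omega$ enters precisely here: combining the weighted co-area formula, Cauchy--Schwarz, and either \eqref{eq:isoperimetricassumption} or \eqref{eq:lowerboundassumption} (in the latter case with the classical Euclidean isoperimetric inequality), and using the preliminary lower bound $\omega(\{u_\ast>t\})\ge\omega(B_1\setminus B_{1-2\rho_0})>0$ for $t<c/2$, one obtains a uniform quadratic estimate $\alpha_t\ge c_1 t^2$ for $0<t\le t_0$, where $c_1,t_0>0$ depend only on the data. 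On the other hand, expanding $\int|\nabla u_t|^2\omega\,dx$ and using the $\omega$-subharmonicity of $u_\ast$ (Lemma \ref{lem:boundedness}) together with Cauchy--Schwarz on the cross term $\int\nabla u_\ast\cdot\nabla\eta\,\omega\,dx$ against the uniform energy bound gives
\[
\int_{B_1}|\nabla u_t|^2\omega\,dx-\int_{B_1}|\nabla u_\ast|^2\omega\,dx\le C_2 t+C_3 t^2
\]
with $C_2,C_3$ independent of $\varepsilon$. Substituting both bounds into the minimality inequality $J_\varepsilon(u_t)\ge J_\varepsilon(u_\ast)$ and analyzing the sub-cases $\alpha_t\le\delta$ and $\alpha_t>\delta$ separately produces, for a suitable $t\in(0,t_0]$, an inequality of the form $c_1 t^2/\varepsilon\le C_2 t+C_3 t^2$ that fails once $\varepsilon<\varepsilon_\ast$ for a threshold $\varepsilon_\ast$ depending only on the data.

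The main obstacle is the linear-in-$t$ term $C_2 t$ in the Dirichlet-energy estimate. Because $u_\ast$ is $\omega$-subharmonic in $B_1$ but only $\omega$-harmonic in $\{u_\ast>0\}$, the cross term $\int\nabla u_\ast\cdot\nabla\eta\,\omega\,dx$ carries a nontrivial contribution from the distributional nonnegative measure $\D(\omega\nabla u_\ast)$ concentrated on the free boundary, which cannot be cancelled by simply choosing $\eta$ supported in $\{u_\ast>0\}$ (such a choice would keep the perturbation from shrinking the positivity set). Balancing this unavoidable linear loss against the quadratic gain $c_1 t^2/\varepsilon$ provided by the isoperimetric or lower-bound hypothesis on $\omega$ is the delicate core of the proof, and is exactly what determines the threshold $\varepsilon_\ast$.
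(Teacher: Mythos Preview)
Your Case~1 is correct and is a clean alternative to the paper's argument (which replaces $u_\ast$ by its $\omega$-harmonic extension in a small ball on the free boundary and invokes Harnack). Your partition-of-unity reduction to test functions with small $\omega$-support is a nice way to conclude $u_\ast$ is $\omega$-harmonic in all of $B_1$.

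Case~2, however, has a genuine gap. The inequality you arrive at, $c_1 t^2/\varepsilon \le C_2 t + C_3 t^2$, does \emph{not} fail for small $t$: it is equivalent to $t\le C_2\varepsilon/(c_1-C_3\varepsilon)$, which is simply a constraint, not a contradiction. To get a contradiction you would need to force $t$ to be bounded below independently of $\varepsilon$, but in the sub-case $\alpha_t>\delta$ you lose the $\alpha_t/\varepsilon$ gain (it becomes only $\delta/\varepsilon\le C_0$) and cannot take $t=t_0$. The linear loss $C_2t$ from the cross term $\int\nabla u_\ast\cdot\nabla\eta\,\omega\,dx$ is, as you yourself note, unavoidable with a cutoff competitor $(u_\ast-t\eta)^+$, and a quadratic-in-$t$ volume gain cannot beat it as $t\to0$.

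The paper sidesteps this by a different competitor: after normalizing so that $\inf_{\partial B_1}g\ge1$, set $u_t=(u-t)^+/(1-t)$ on $\{u\le1\}$ and $u_t=u$ on $\{u\ge1\}$. This preserves the boundary data with no cutoff, so there is no $\nabla\eta$ cross term; the only energy loss is $\frac{2t-t^2}{(1-t)^2}\int_{\{t<u<1\}}|\nabla u|^2\omega\,dx=O(t)$. Crucially, the term $\int_{\{0<u<t\}}|\nabla u|^2\omega\,dx$ lands on the \emph{gain} side together with $\varepsilon^{-1}\omega(\{0<u<t\})$. Writing the sum via co-area and applying $|\nabla u|^2+\varepsilon^{-1}\ge 2\varepsilon^{-1/2}|\nabla u|$ gives a gain $\ge 2\varepsilon^{-1/2}\int_0^t\int_{\{u=s\}}\omega\,d\mathcal{H}^{n-1}\,ds$, which after the isoperimetric (or lower-bound) hypothesis is $\ge c\,\varepsilon^{-1/2}t$. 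Dividing by $t$ and sending $t\to0$ then yields $C\ge c\,\varepsilon^{-1/2}$, a genuine contradiction for $\varepsilon$ small. The point is that both gain and loss are linear in $t$, with the gain carrying an extra $\varepsilon^{-1/2}$; your competitor produces a quadratic gain against a linear loss, and that mismatch cannot be closed.
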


\begin{proof}
Let $u \in H_g^1 (B_1, \omega)$ be a minimizer of \eqref{penalized problem}
as in Section \ref{general prop sct}. 
Without loss of generality, we may assume, up to rescaling, that $\inf_{\partial B_1} g := \gamma \ge 1$.

Suppose that $\omega( \{ u > 0\}\cap B_1 ) < m$. 
Since $u$ is locally H\"older continuous, there exists $x_0 \in \partial\{u > 0\} \cap B_{1}$
and $r>0$ sufficiently small such that 
\begin{equation}\label{positivity on FB}
0<\omega(\{ u > 0\}\cap B_{r/2}(x_0))  \le \omega(B_r(x_0)) < m - \omega (\{ u > 0\} ).
\end{equation}
Next, let $v \in H^1(B_1,\omega)$ such that $\D(\omega(x)\nabla v)=0$
in the weak sense in $B_r(x_0)$ and $v=u$ in $B_1 \setminus B_r(x_0)$. 
As $\{ v > 0\}\cap B_1 = ( \{ v> 0\} \cap B_r(x_0)) \cup(( \{ u > 0\}\cap B_1) \setminus B_r(x_0))$, we get
$$\omega( \{ v > 0\}\cap B_1)  \le  \omega(B_r(x_0)) +  \omega ( (\{ u > 0\}\cap B_1) \setminus B_r(x_0) ) < m. 
$$
Hence, 
\begin{align*}
J_{\varepsilon}(v,\omega,B_1) &= \int_{B_1} | \nabla v|^2 \omega\, dx \\
    &= \int_{B_r(x_0)} |\nabla v|^2 \omega\,dx + \int_{B_1 \setminus B_r(x_0)} |\nabla u|^2 \omega\, dx \\
    &< \int_{B_1} |\nabla u|^2\omega\,  dx
\end{align*}
since $u\geq0$ cannot be a solution to $\D(\omega(x)\nabla u)=0$ in $B_r(x_0)$
by virtue of \eqref{positivity on FB} and the Harnack inequality.
This is a contradiction to the minimality of $u$. Thus,
$$\omega ( \{ u > 0 \}\cap B_1 ) \ge m.$$ 

To prove the conclusion assume, for the sake of contradiction,
that for all $\varepsilon>0$ we have $\omega ( \{ u > 0 \} \cap B_1) > m$. Then, for all $t > 0$ small enough (depending on $u$) there holds
$$\omega( \{ u > t \} \cap B_1) > m.$$
Define
$$u_{t}=
\begin{cases}
\displaystyle\frac{(u - t)^{+}}{1 - t}&\hbox{if}~u \le 1\\
u&\hbox{if}~u\ge1.
\end{cases}$$
Since $u_t \in H_{g}^1 (B_1,\omega)$,
$$J_{\varepsilon} (u, \omega, B_1) \le J_{\varepsilon} (u_t , \omega , B_1)$$
which implies
$$\int_{B_1} \big( |\nabla u|^2 - |\nabla u_{t}|^2 \big) \omega\,dx
\le \frac{1}{\varepsilon} \big( \omega( \{ u_t > 0 \}\cap B_1) - \omega( \{ u > 0 \}\cap B_1) \big).$$
Since 
$$\int_{B_1}\big( |\nabla u|^2 - |\nabla u_{t}|^2 \big)\omega\,dx  =\int_{ \{ 0 < u < t\}} |\nabla u |^2 \omega\,dx
+ \frac{t^2 - 2t}{(1 -t)^2} \int_{\{ t < u < 1\}} |\nabla u|^2  \omega\, dx
$$
and  $\{ u_t > 0 \} = \{ u > t \}$, we obtain 
$$\int_{\{ 0 < u < t \}} |\nabla u|^2 \omega\,dx + \frac{1}{\varepsilon}
\omega\big(\{ 0 < u < t\}\cap B_1\big) \le \frac{2t - t^2}{(1 -t)^2} \int_{B_1} |\nabla u|^2 \omega\,dx.$$
Note that 
$$\int_{B_1} |\nabla u|^2 \omega\,dx \le J_{\varepsilon} (u,\omega,B_1) \le \int_{B_1} |\nabla v_r|^2 \omega\,dx$$
where $v_r$ is the function constructed in \eqref{function with small positive set}. Thus,
\begin{equation}\label{estimate fo u < t}
     \int_{\{ 0 < u < t \}} |\nabla u|^2 \omega\, dx + \frac{1}{\varepsilon} \omega\big( \{ 0 < u < t\}\cap B_1\big) \le \frac{2t - t^2}{(1 -t)^2} C
\end{equation}
where $C > 0$ is a constant independent of $\varepsilon$. Next, we write the left hand side of \eqref{estimate fo u < t} as
$$\int_{\{ 0 < u < t \}} |\nabla u|^2 \omega\,dx + \frac{1}{\varepsilon} \omega\big(\{ 0 < u < t\}\cap B_1\big) 
=\int_{\{ 0 < u < t\}} \big( |\nabla u|^2 + \varepsilon^{-1} \big) \omega\,dx.$$
To estimate this term, recall that $u \in W^{1,1}(B_1)$ is locally H\"older continuous by Theorem \ref{thm-regularity}.
Then we can apply the co-area formula for Sobolev functions from \cite[Theorem 1.4]{maly-swanson-ziemer} to get
\begin{align*}
\int_{\{0<u<t\}}\big(|\nabla u|^2 +& \varepsilon^{-1}\big)\omega\, dx
= \lim_{N \to \infty}\int_{\{0<u<t\} \cap \{1/N<|\nabla u|<N\}}\big(|\nabla u|^2 + \varepsilon^{-1}\big) \omega\, dx  \\
&= \lim_{N \to \infty}\int_{\{0<u<t\} \cap \{1/N<|\nabla u|<N\}} |\nabla u| \bigg(|\nabla u| + \frac{\varepsilon^{-1}}{|\nabla u|} \bigg) \omega\, dx \\
&= \lim_{N \to \infty}\int_0^\infty\int_{\{0<u<t\}\cap\{1/N<|\nabla u|<N\}\cap\{u =s\}} \bigg(|\nabla u| + \frac{\varepsilon^{-1}}{|\nabla u|} \bigg)
\omega\,d\mathcal{H}^{n-1} \, ds \\
&= \int_0^t\int_{\{ u = s\}}\bigg(|\nabla u| + \frac{\varepsilon^{-1}}{|\nabla u|} \bigg)\omega\,d\mathcal{H}^{n-1}\, ds.
\end{align*}
Consequently, and using that $x+\frac{\varepsilon^{-1}}{x}\geq 2 \sqrt{\varepsilon^{-1}}$ for all $x>0$, 
\begin{equation}\label{eq:crucial}
\begin{aligned}
 \frac{2t - t^2}{(1 -t)^2} C&\geq  \int_0^{t} \int_{\{ u = s\}} \bigg( |\nabla u| +\frac{\varepsilon^{-1}}{|\nabla u|} \bigg) \omega\, d\mathcal{H}^{n-1}\, ds \\
&  \ge   2  \sqrt{\varepsilon^{-1}} \int_0^t \int_{\{ u = s\} }  \omega\, d\mathcal{H}^{n-1}\, ds.
\end{aligned}
\end{equation}

Assume first that $\omega$ satisfies the isoperimetric inequality \eqref{eq:isoperimetricassumption}.
Then, from \eqref{eq:crucial},
$$\frac{2t - t^2}{(1 -t)^2} C\ge \frac{2 \sqrt{\varepsilon^{-1}}}{C_0} \int_0^t \omega (\{ u > s \}\cap B_1)^{\frac{n-1}{n}}\, ds.$$
After multiplying by $1/t$ and letting $t \to 0$, by the Lebesgue differentiation theorem, we arrive at
$$CC_0\sqrt{\varepsilon} \ge \omega ( \{ u > 0 \}\cap B_1)^{\frac{n-1}{n}}>m^{\frac{n-1}{n}}.$$
By choosing $\varepsilon>0$ small enough, we reach a contradiction.

Assume next that $\omega$ is bounded below by $\tau$ as in \eqref{eq:lowerboundassumption}.
Then, by applying this assumption to \eqref{eq:crucial}
and the isoperimetric inequality from \cite{maly-swanson-ziemer},
\begin{align*}
 \frac{2t - t^2}{(1 -t)^2} C
&\ge 2\tau \sqrt{\varepsilon^{-1}} \int_0^t \mathcal{H}^{n-1}(\{u=s\}\cap B_1)\, ds \\
&\ge 2\tau \sqrt{\varepsilon^{-1}} C_n\int_0^t |\{u>s\}\cap B_1|^{\frac{n-1}{n}}\,ds.
\end{align*}
After multiplying by $1/t$ and letting $t\to0$, we get
$$\frac{\sqrt{\varepsilon} }{C_n\tau} \ge | \{ u > 0 \} \cap B_1|^{\frac{n-1}{n}}.$$
Since we are assuming that $\omega( \{ u > 0 \}\cap B_1) > m$, by applying
Lemma \ref{lem:propertiesofA2}$(3)$, we can estimate
$$| \{ u > 0 \}\cap B_1| \geq \bigg(\frac{m}{C_2 \omega(B_1)}\bigg)^{\frac{1}{S_D}} |B_1|=:\tilde{m}$$
and obtain
$$\frac{\sqrt{\varepsilon} }{C_n\tau}\geq\tilde{m}^{\frac{n-1}{n}}$$
which again gives a contradiction by choosing $\varepsilon > 0$ sufficiently small.

We have shown that there exists $0<\varepsilon_\ast<1$ such that if $u_\ast\in H^1_g(B_1,\omega)$ is a
nonnegative minimizer for \eqref{penalized problem} then $\omega(\{u_\ast>0\}\cap B_1)=m$.
We claim that $u_\ast$ is a minimizer for \eqref{weighted prob}. Clearly, $u_\ast\in K_0$
and, for any $v\in K_0\subset H^1_g(B_1)$,
$$\int_{B_1}|\nabla u_\ast|^2\omega\,dx\leq J_{\varepsilon_\ast}(u_\ast,\omega,B_1)\leq J_{\varepsilon_\ast}(v,\omega,B_1)
=\int_{B_1}|\nabla v|^2\omega\,dx$$
as claimed.
\end{proof}

%%%%%%%%%%%%%%%%%%%%%%%%%%%%%%%%%%%%%%%%%%%%%%%%%%
%References
%%%%%%%%%%%%%%%%%%%%%%%%%%%%%%%%%%%%%%%%%%%%%%%%%%

\bibliographystyle{plain}
\bibliography{biblio}
\end{document}